
\documentclass[12pt]{amsart}%
\usepackage{amsfonts}
\usepackage{txfonts}
\usepackage{mathtools}
\usepackage{amsmath}
\usepackage{amssymb}
\usepackage{amsthm}
\usepackage{newlfont}
\usepackage{graphicx}%
\setcounter{MaxMatrixCols}{30}
\providecommand{\U}[1]{\protect\rule{.1in}{.1in}}
\textwidth16truecm\hoffset-1.9truecm
\textheight23truecm\voffset-1.6truecm
\headsep24pt

\newtheorem{thm}{Theorem}[section]

\newtheorem{cor}[thm]{Corollary}
\newtheorem{lem}[thm]{Lemma}
\newtheorem{prop}[thm]{Proposition}

\theoremstyle{definition}

\newtheorem{rem}[thm]{Remark}
\numberwithin{equation}{section}

\newcommand{\R}{\mathbb R}

\newcommand{\ed}{\end {document}}

\DeclareMathOperator{\ric}{Ric}
\begin{document}
\title{Yau Type Gradient Estimates For $\Delta u + au(\log u)^{p}+bu=0$ On Riemannian Manifolds}
\author{Bo Peng}
\address{School of Mathematical Sciences, UCAS, Beijing 100190, China; Institute of Mathematics
The Academy of Mathematics and systems of sciences,Chinese academy of sciences}
\email{pengbo17@mails.ucas.ac.cn}

\author{Youde  Wang}
\address{1. College of Mathematics and Information Sciences, Guangzhou University; 2. Hua Loo-Keng Key Laboratory of Mathematics, Institute of Mathematics, Academy of Mathematics and Systems Science, Chinese Academy of Sciences, Beijing 100190, China; 3. School of Mathematical Sciences, University of Chinese Academy of Sciences, Beijing 100049, China.}
\email{wyd@math.ac.cn}

\author{Guodong Wei}
\address{
School of Mathematics (Zhuhai), Sun Yat-sen University, Zhuhai, Guangdong 519082, P. R. China}
\email{weigd3@mail.sysu.edu.cn}\thanks{ }

\begin{abstract}
In this paper, we consider the gradient estimates of the positive solutions to the following equation defined on a complete Riemannian manifold $(M, g)$
$$\Delta u + au(\log u)^{p}+bu=0,$$
where $a, b\in \mathbb{R}$ and $p$ is a rational number with $p=\frac{k_1}{2k_2+1}\geq2$ where $k_1$ and $k_2$ are positive integer numbers. we obtain the gradient bound of a positive solution to the equation which does not depend on the bounds of the solution and the Laplacian of the distance function on $(M, g)$. Our results can be viewed as a natural extension of Yau's estimates on positive harmonic function.

\end{abstract}
\maketitle

\section{Introduction}
Recently, one payed attention to studying the following elliptic equation defined on a complete, noncompact Riemannian manifold $(M, g)$
\begin{equation}\label{equ*}
\Delta u(x) + a(x)u(x)^\beta\left(\log u(x)\right)^{\alpha}+b(x)u(x)=0,
\end{equation}
where $\alpha, \beta\in\mathbb{R}$ and $a(x), b(x)\in C^2(M)$. The equation has some relations to the geometrical quantities. On one hand, it is linked with gradient Ricci solitons, for example, see \cite{CL, C-C*, Ma, Yang} for detailed explanations. On the other hand, it is closely related to log-Sobolev constants of Riemmannian manifolds (see \cite{G, Chu-Y}). Throughout this article, we will use the notation $\ric(g)$ to denote the Ricci curvature of $(M,g)$. Recall that, log-Sobolev constants $S_M$, associated to a closed Riemannian manifold $(M, g)$, are the smallest positive constants such that the logarithmic-Sobolev inequality
$$\int_Mu^2\log u^2dM\leq S_M\int_M|\nabla u|^2dM$$
holds for all smooth function $u$ defined on $M$ with $\int_Mu^2dM=\mbox{Vol}(M)$. Chung and S.-T. Yau \cite{Chu-Y} showed that if the function $\psi$ attains the log-Sobolev constant $S_M$ with $\int_M\psi^2dM=\mbox{Vol}(M)$, then it must satisfy
$$\Delta \psi + S_M \psi\log \psi^2 = 0.$$
They then showed that
$$\sup_{x\in M}\psi \leq e^{n/2}, \quad\quad |\nabla\psi|^2 +2S_M \psi^2\log\psi\leq S_Mn\psi^2$$
and used these estimates to give a lower bound for $S_M$ if $\ric(g)\geq 0$. Moreover, F. Wang extends this to the case $\ric(g)\geq -K$ in \cite{Wf}. For more details we refer to \cite{Chu-Y, Wf}.

In physics, the logarithmic Schr\"odinger equation written by
$$i\epsilon\frac{\partial\Psi}{\partial t}=-\epsilon^2\Delta\Psi+(W(x)+w)\Psi -\Psi\log|\Psi|^2, \quad\Psi:[0, \infty)\times\mathbb{R}^n\to\mathbb{C},\,\, n\geq1,$$
has also received considerable attention. It is well-known that this class of equation has some important physical applications, such as quantum mechanics, quantum optics, open quantum systems, effective quantum gravity, transport and diffusion phenomena, theory of superfluidity and Bose-Einstein condensation (see \cite{Z} and the references therein). In its turn, standing waves solution, $\Psi$, for this logarithmic Schr\"odinger equation is related to the solutions of the following equation
$$-\epsilon^2\Delta u + V(x)u -u\log u^2=0,$$
where $V(x)$ is a real function on $\mathbb{R}^n$. Many mathematicians have also ever studied the existence and properties of solutions to such elliptic equation on an Euclidean spaces (see \cite{A-J, W-Z} and references therein).

From the viewpoint of analysis, for the case $b(x)\equiv0$ and $\alpha=0$ the equation (\ref{equ*}) reduces to $\Delta u+ a(x)u^\beta=0$ which has been studied intensively. For instance, for the family of equations
$$\Delta u + u^\beta=0,\quad\quad n/(n -2) \leq \beta < (n+2)/(n-2)$$
with subcritical Sobolev growth, let $u\geq 0$ be a nonnegative solution of the equation in $B_1(0)\setminus\{0\}\subset\mathbb{R}^n$ with a nonremovable isolated singularity. Gidas and Spruck \cite{BJ} used analytic techniques to prove that, for $n/(n-2) <\beta< (n+2)/(n-2)$,
$$u = (1 + o( 1 ))c_0/|x|^{2/(\beta-1)}\quad\quad \text{as}\quad  x\to 0,$$
where $c_0=c(n, \beta)$ is a positive constant number. Moreover, we are reminiscent to that the general equations $\Delta u + g(u)=0$ on $B_1(0)\setminus\{0\}\subset\mathbb{R}^n$ were also considered in \cite{CGS}, where $g(u)$ is nondecreasing function for $u$, $g(0)=0$ and satisfies some technical conditions. In particular, They proved that for $\beta=n/(n-2)$, any nonnegative solution $u$ to the above equation with a nonremovable isolated singularity behaves like
$$u=(1+o(1))\left[\frac{(n-2)^2/2}{r^2\log(1/r)}\right]^{(n-2)/2},$$
which gives an improvement of a result of Aviles \cite{A}.  Recently, M. Ghergu, S. Kim and H. Shahgholian \cite{GKS} studied the semilinear elliptic equation
$$\Delta u + u^\beta(|\log u|)^\alpha=0, \quad\quad\text{on}\quad B_1(0)\setminus\{0\},$$
where $B_1\subset\mathbb{R}^n$ with $n\geq3$, $n/(n-2) < \beta < (n+2)/(n-2)$ and $-\infty<\alpha<+\infty$. They showed that nonnegative solution $u\in C^2(B_1\setminus\{0\})$ of the above equation either has a removable singularity at the origin or behaves like some class of functions as $x\to 0$. They extended the classical argument in \cite{CGS, BJ} mentioned in the above to a log-type nonlinearity. For more heuristical details, we refer to \cite{A, CGS, BJ, PWW} and references therein.

In order to focus on the core of the problem and not to lengthen this article by adding technicalities, we restrict ourselves to the case of $b(x)\equiv constant$ and $\beta=1$ in this paper. That is, we focus on studying the gradient estimate of the positive solution to the following nonlinear elliptic equation defined on an $n$-dimensional complete noncompact Riemannian manifold $(M,g)$
\begin{equation}\label{equ}
\Delta u(x) + au(x)\left(\log u(x)\right)^{p}+bu(x)=0,
\end{equation}
where $a\neq 0$, $b$ are two constants and $$p=\frac{k_{1}}{2k_{2}+1} \geq 2,$$ here $k_{1}$ and $k_{2}$ are two positive integers.

Now, let's recall some previous work related closely to this paper. In the case $a(x)\equiv0$ and $b(x)\equiv0$, \eqref{equ*} is the Laplace equation. The corresponding  gradient estimate of \eqref{equ*} has ever been established by Yau in the very famous paper \cite{yau} (for its generalized version, see the remarkable work \cite{C-Y} due to Cheng-Yau). In 1980s, for the case $a(x)\equiv0$ and $b(x)\neq0$ is a smooth function P. Li and S.T. Yau \cite{LY} proved the well-known Li-Yau estimate for the corresponding heat equation and derived a Harnack inequality. The importance of gradient estimates and Harnack inequalities can not be overemphasized in geometric analysis and mathematical physics. There is a huge literature on the gradient estimates to the solutions of some elliptic and parabolic equations. In general, these estimates have been used to find H\"olders' continuity of solutions, sharp estimate on the fundamental solution, estimate on the principal eigenvalue. Here we would like to refer but a few to \cite{CH, Chow, C-C, BJ, KZ, LY, Ma, SY, JS, Wang} and references therein.

For the case $a<0$ is a constant, $b=0$, $\beta=1$ and $\alpha=1$ in (\ref{equ*}), that is,
\[
\Delta u(x) +au(x) \log u(x) = 0 \quad \mbox{on} \quad M,
\]
Ma \cite{Ma} studied the gradient estimates of the positive solutions to the above elliptic equation in the case $\dim(M)\geq 3$. Yang \cite{Yang} considered the following
$$\Delta u(x) +au(x) \log u(x) + bu= 0 \quad \mbox{on} \quad M$$
where $a$ and $b$ are two real numbers, and improves the estimate of \cite{Ma} and extends it to the case $a>0$ and $M$ is of any dimension. L. Chen and W. Chen \cite{C-C*} also studied independently the elliptic equation and improves the estimate of \cite{Ma} and extends it to the case $a>0$. In \cite{Yang} and \cite{C-C*} they also studied  $$u_t=\Delta u(x) +au(x) \log u(x) + bu \quad \mbox{on} \quad M$$
and derive a local gradient estimate for the positive solution of the parabolic equation defined on complete noncompact manifolds with a fixed metric and curvature locally bounded below. Later, Cao et al \cite{CL}, H. Dung and N. Dung \cite{Dung}, Huang and Ma\cite{HM}, Qian \cite{Q*}, Zhu and Li \cite{Z-L} also studied the gradient estimates of the positive solutions to the above elliptic or the corresponding parabolic equation in the case $a\in\mathbb{R}$.

On the other hand, in \cite{Ab}, the author also considered the following
\[
\Delta_f u(x) +au(x) (\log u(x))^\alpha = 0
\]
defined on a complete smooth metric measure manifold with weight $e^{-f}$ and Bakry-\'Emery Ricci tensor bounded from below, where $a$ and $\alpha$ are real constants. He obtained the local gradient estimates, which depends on the bound of positive solutions, and prove the global gradient estimates on bounded positive solutions to the equation.

While its parabolic counterpart,
\[
\left( \Delta - q(x,t) - \frac{\partial}{\partial t} \right)u(x,t) = a u(x,t)(\log (u(x,t)))^{\alpha},
\]
where $q(x,t)$ is a $C^{2}$ function, $a$ and $\alpha$ ($\alpha\neq 0$ or $1$) are constants, was also considered by some mathematicians(see \cite{C-C*, Yang, Z-L}). Wu \cite{W} and Yang and Zhang \cite{Y-Z} also paid attention to a similar nonlinear parabolic equation defined on some kind of smooth metric measure space.

In the present paper, for the case $p=\frac{k_{1}}{2k_{2}+1} \geq 2$, we try to improve the classical methods and employ some delicate analytic techniques to obtain a gradient bound of a positive solution to \eqref{equ} which does not depend on such quantities as the bounds of the solution and the Laplacian of the distance function. Now we are in the position to state the main results of this paper.
\begin{thm}[\bf Local gradient estimate]\label{main}
Let $\left( M,g\right)$ be an $n$-dimensional complete Riemannian manifold. Suppose there exists a nonnegative constant $K:=K(2R)$ such that $\ric(g) \geq -Kg$ in the geodesic ball $\mathbb{B}_{2R}(O)$ where $O$ is a fixed point on $M$. Then, for any smooth positive solution $u(x)$ to equation \eqref{equ} with $a \neq 0$ and $p=\frac{k_{1}}{2k_{2}+1} \geq 2$ where $k_{1}$ and $k_{2}$ are integers and any $1<\lambda<2$, there holds true that on $\mathbb{B}_{R}(O)$,
\begin{itemize}
\item [(1)] if $a > 0$,
\begin{equation*}
\begin{split}
\frac{\left\vert \nabla u \right\vert^{2}}{u^{2}}+\lambda a \big(\log u\big)^{p}\leq \,\,  & n \bigg[ 
\frac{\lambda}{2-\lambda}\frac{\left((n-1)(1+\sqrt{K}R)+2\right)C_{1}^{2}+C_{2}}{R^{2}}\\
&+\max\left(\frac{\lambda^{2}}{4(\lambda-1)^{2}},\frac{2\lambda^{2}}
{(2-\lambda)^{2}}\right)\frac{nC_{1}^{2}}{R^{2}}\\
&+\max\left(\frac{2\lambda}{\lambda-1},\frac{2\lambda}{2-\lambda}
\right)K +C(n,p,a, b,\lambda)\bigg], 
\end{split}
\end{equation*}
where $C(n,p,a, b,\lambda)$ can be expressed explicitly as
\begin{equation*}
\begin{split}
C(n,p,a, b,\lambda)= &\max\left(\frac{6\lambda}{2-\lambda},\frac{16-7\lambda}{\lambda-1},\frac{16-7\lambda}{2-\lambda}\right)\frac{2\left\vert b\right\vert}{n} +\frac{9(\lambda-1)aH^{p+1}}{2n(p-1)}\\
& +\frac{3(4-\lambda)}{2\lambda n(p-1)}H\left\vert b\right\vert+\max(3(\lambda-1),\lambda)\frac{2a}{n\left\vert J \right\vert^{p}}+\lambda (2n(p-1))^{p-1}a;
\end{split}
\end{equation*}
\item[(2)] if $a < 0$,
\begin{equation*}
\begin{split}
\frac{\left\vert \nabla u \right\vert^{2}}{u^{2}}+\lambda a \left(\log u\right)^{p}\leq \,\, & n\bigg[ \frac{\lambda}{2-\lambda}\frac{\left((n-1)(1+\sqrt{K}R)+2\right)C_{1}^{2}+C_{2}}{R^{2}}\\ &+\max\left(\frac{\lambda^{2}}{4(\lambda-1)^{2}},\frac{\lambda^{2}}{(2-\lambda)^{2}}\right)\frac{nC_{1}^{2}}{R^{2}}\\
&+2\max\left(\frac{\lambda}{\lambda-1},\frac{\lambda}{2-\lambda}\right)K+C(n,p,a, b,\lambda)\bigg],
\end{split}
\end{equation*}
with
\begin{equation*}
\begin{split}
C(n,p,a, b,\lambda)=&\max\left(\frac{6\lambda}{2-\lambda},\frac{16-7\lambda}{2-\lambda}\right)\frac{2\left\vert b\right\vert}{n}-\frac{3\lambda\left\vert L \right\vert^{p}}{2n}a+\frac{3(4-\lambda)\left\vert b\right\vert \left\vert L\right\vert}{2\lambda n(p-1)}\\
&-\frac{4(\lambda-1)aV^{p}}{n}.
\end{split}
\end{equation*}
\end{itemize}
Here, $C_1$ and $C_2$ are two positive constants independent of the goemetry of $(M,g)$,
\[
J =\frac{(2-\lambda)p-\sqrt{(2-\lambda)^{2}p^{2}+\frac{8\lambda(\lambda-1)}{n}p(p-1)}}{2\lambda p(p-1)}, \quad \quad V = \frac{pn+\sqrt{n^{2}p^{2}+8np(p-1)}}{4}\frac{\lambda}{\lambda-1},
\]
\[
L=\frac{(\lambda-4)np-\sqrt{(\lambda-4)^{2}n^{2}p^{2}+32\lambda(\lambda-1)np(p-1)}}{8(\lambda-1)} \quad \text{and} \quad H=\max\left(\left\vert L\right\vert,V\right).
\]
\end{thm}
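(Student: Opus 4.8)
The plan is to run the classical Bochner/maximum-principle machinery for Yau-type estimates, with the new work concentrated in handling the sign-indefinite power $(\log u)^p$. First I would pass to $f=\log u$, which by \eqref{equ} satisfies the pointwise identity
\[
\Delta f=-|\nabla f|^{2}-af^{p}-b
\]
(note $f^{p}$ is well defined even where $f<0$ precisely because $p=\frac{k_{1}}{2k_{2}+1}$ has odd denominator). The quantity to be estimated is $F:=|\nabla f|^{2}+\lambda a f^{p}=\frac{|\nabla u|^{2}}{u^{2}}+\lambda a(\log u)^{p}$, and the goal is a pointwise upper bound for $F$ on $\mathbb{B}_{R}(O)$.

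Next I would apply Bochner's formula to $|\nabla f|^{2}$, insert $\ric(g)\ge -Kg$ and the trace inequality $|\nabla^{2}f|^{2}\ge\frac{1}{n}(\Delta f)^{2}$, differentiate the equation to write $\nabla\Delta f=-\nabla|\nabla f|^{2}-ap f^{p-1}\nabla f$, and then substitute both $\Delta f=-F+(\lambda-1)af^{p}-b$ and $\nabla|\nabla f|^{2}=\nabla F-\lambda ap f^{p-1}\nabla f$. After collecting the cross terms one arrives at a differential inequality of the schematic form
\begin{align*}
\Delta F\ge{}&\tfrac{2}{n}\bigl(F-(\lambda-1)af^{p}+b\bigr)^{2}-2\langle\nabla f,\nabla F\rangle+(\lambda-2)ap\, f^{p-1}|\nabla f|^{2}\\
&{}+\lambda ap(p-1)\, f^{p-2}|\nabla f|^{2}-2K|\nabla f|^{2}+(\text{lower-order terms in } f).
\end{align*}
The role of $\lambda\in(1,2)$ is exactly to arrange the coefficients of the $f^{p-1}|\nabla f|^{2}$–type terms so that, together with the freedom in how $(\Delta f)^{2}$ is split, they can later be absorbed.

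Then comes the localization. I would take a standard cutoff $\phi$ with $\phi\equiv 1$ on $\mathbb{B}_{R}(O)$, $\operatorname{supp}\phi\subset\mathbb{B}_{2R}(O)$, $|\nabla\phi|^{2}\le C_{1}^{2}\phi/R^{2}$, and — using the Laplacian comparison theorem for the distance function from $O$, which is where any dependence on the Laplacian of the distance is traded for dependence on $K$ and $R$ — $\Delta\phi\ge -((n-1)(1+\sqrt{K}R)+2)C_{1}/R^{2}$. Applying the maximum principle to $\phi F$ on $\mathbb{B}_{2R}(O)$: if $\max(\phi F)\le 0$ the estimate is immediate, and otherwise at an interior maximum point $x_{0}$ one has $\nabla(\phi F)=0$ and $\Delta(\phi F)\le 0$. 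Substituting $\nabla F=-(F/\phi)\nabla\phi$ turns this into an algebraic inequality in $F(x_{0})$ of the form $\tfrac{2}{n}\phi F^{2}\lesssim F\cdot(\text{cutoff terms})+ K\phi F+\phi\cdot(\text{the }f\text{-terms})$, which, being quadratic in $\phi F$, yields a bound on $\phi F(x_{0})$ of the announced type.

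The main obstacle is the last step: disposing of the terms carrying $f^{p-1}|\nabla f|^{2}$, $f^{p-2}|\nabla f|^{2}$, $f^{2p-1}$ and $f^{p-1}$ without using any a priori bound on $f$ (i.e. on $\sup u$ or $\inf u$), since that is exactly the point of the theorem. This forces a case split on the sign of $a$ and, within each case, a split into ranges of $f$ according to the signs of the various powers; on each range one completes the square, pairing the $f^{p-1}|\nabla f|^{2}$-type contributions against the good terms $\tfrac{2}{n}F^{2}$, the negative $-|\nabla f|^{2}$ coming out of expanding $\tfrac{2}{n}(\cdot)^{2}$, and the term $\lambda a f^{p}$ itself. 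The thresholds at which the optimization over $f$ is saturated are precisely the roots $J$, $V$, $L$ of the quadratics generated by these completions of squares (and $H=\max(|L|,V)$), the two branches in the $\max(\cdot,\cdot)$ coefficients come from the competing ways these squares can be completed, and the residual constants assemble into $C(n,p,a,b,\lambda)$. Carefully bookkeeping all constants through the quadratic inequality at $x_{0}$ and using $\phi\equiv 1$ on $\mathbb{B}_{R}(O)$ then gives the stated bounds.
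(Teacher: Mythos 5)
Your proposal follows essentially the same route as the paper: pass to $\omega=\log u$, apply Bochner with $|\nabla^2\omega|^2\ge\frac1n(\Delta\omega)^2$ and $\ric\ge -Kg$ to the auxiliary function, localize with a cutoff controlled by Laplacian comparison, and at the interior maximum split into cases according to the sign of $a$ and the ranges of $\omega$ delimited by the roots $J$, $V$, $L$ of the quadratics produced by completing squares; your schematic differential inequality (including the coefficients $(\lambda-2)ap\,f^{p-1}|\nabla f|^2$ and $\lambda ap(p-1)f^{p-2}|\nabla f|^2$) matches the paper's computation. The only deviation is that the paper works with $G=|\nabla\omega|^2+\lambda a\,\omega^p+2|b|$ rather than your $F$ without the additive constant $2|b|$, a shift that only streamlines the bookkeeping of the $b$-terms and does not change the method.
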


\medskip
Actually for any $\lambda >1$, we can obtain similar results to Theorem \ref{main}.
\begin{thm} \label{any}
Suppose that $(M, g)$ satisfies the same conditions as in Theorem \ref{main}. If $u$ is a smooth positive solution $u(x)$ to equation \eqref{equ} with $a \neq 0$ and $p=\frac{k_{1}}{2k_{2}+1} \geq 2$ where $k_{1}$ and $k_{2}$ are integers. Then for any $2\leq\lambda<+\infty$ there holds true that, on $\mathbb{B}_{R}(O)$, if $a>0$, then
\[
\frac{\left\vert \nabla u \right\vert^{2}}{u^{2}} +\lambda a(\log u)^{p}\leq 2\lambda n \bigg( 2K+A+\frac{6nC_{1}^{2}}{R^{2}}+\frac{(2n(p-1))^{p-1}}{2}a+\frac{12\left\vert b\right\vert}{n}+\frac{a}{n\left\vert J \right\vert^{p}}+\frac{V(aV^{p}+\left\vert b\right\vert)}{n(p-1)}\bigg);
\]
and if $a<0$, then
\[
\frac{\left\vert \nabla u \right\vert^{2}}{u^{2}} +\lambda a(\log u)^{p}\leq 2\lambda n \bigg( 2K+A+\frac{6nC_{1}^{2}}{R^{2}}+\frac{12\left\vert b\right\vert}{n}-\frac{2aV^p}{n}+\frac{|b|V}{n(p-1)}\bigg).
\]
Here $$A=\frac{\left((n-1)(1+\sqrt{K}R)+2\right)C_{1}^{2}+C_{2}}{R^{2}},$$ 
$J$ and $V$ are constants defined in Theorem \ref{main} with $\lambda=3/2$.
\end{thm}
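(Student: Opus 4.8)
The plan is to reduce everything to the single case $\lambda=3/2$ of Theorem~\ref{main} by an elementary sign argument, and then to carry out a purely computational comparison of explicit constants. Fix a point of $\mathbb{B}_R(O)$ and set
\[
F:=\frac{|\nabla u|^2}{u^2}+\lambda a(\log u)^p,\qquad G:=\frac{|\nabla u|^2}{u^2}+\tfrac32\,a(\log u)^p,
\]
so that $F=G+\big(\lambda-\tfrac32\big)a(\log u)^p$. Since $1<3/2<2$, Theorem~\ref{main} gives an explicit bound $G\le\Phi$ on $\mathbb{B}_R(O)$, where $\Phi=\Phi(n,p,a,b,R,K)$ denotes the right-hand side of the relevant case ($a>0$ or $a<0$) of Theorem~\ref{main} with $\lambda=3/2$; note $\Phi\ge 0$, since after absorbing the sign of $a$ into the coefficients $-|L|^pa$ and $-aV^p$ every summand of $\Phi$ is nonnegative. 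The hypothesis that $p=k_1/(2k_2+1)$ has odd denominator is precisely what makes $(\log u)^p$ meaningful for all $u>0$, with $(\log u)^p$ of the same sign as $\log u$; this legitimises the dichotomy below.

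I would then split according to the sign of $a(\log u)^p$. If $a(\log u)^p\le 0$, then because $\lambda\ge 2>3/2$ the term $\big(\lambda-\tfrac32\big)a(\log u)^p$ is nonpositive, so $F\le G\le\Phi\le\tfrac{2\lambda}{3}\Phi$, the last step using $\tfrac{2\lambda}{3}\ge 1$ and $\Phi\ge 0$. If instead $a(\log u)^p>0$, then dropping the nonnegative term $|\nabla u|^2/u^2$ from $G\le\Phi$ gives $\tfrac32\,a(\log u)^p\le\Phi$, that is $a(\log u)^p\le\tfrac23\Phi$, so that
\[
F\le\Phi+\big(\lambda-\tfrac32\big)\tfrac23\Phi=\Big(1+\tfrac{2\lambda}{3}-1\Big)\Phi=\tfrac{2\lambda}{3}\,\Phi .
\]
In both cases $F\le\tfrac{2\lambda}{3}\Phi$ on $\mathbb{B}_R(O)$; this is the whole content of the reduction, and it only remains to unwind $\tfrac{2\lambda}{3}\Phi$.

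Finally I would substitute the $\lambda=3/2$ values into $\Phi$. At $\lambda=3/2$ one has $\tfrac{\lambda}{2-\lambda}=3$, $\tfrac{2\lambda}{\lambda-1}=\tfrac{2\lambda}{2-\lambda}=6$, $\tfrac{\lambda^2}{4(\lambda-1)^2}=\tfrac94$, and the various maxima in Theorem~\ref{main} collapse, so that $\Phi=n\big[3A+c\,nC_1^2/R^2+6K+C(n,p,a,b,3/2)\big]$ with $c\in\{9,18\}$ and $A$ as in the statement; multiplying by $\tfrac{2\lambda}{3}$ gives $2\lambda n\big[A+\tfrac{c}{3}\,nC_1^2/R^2+2K+\tfrac13C(n,p,a,b,3/2)\big]$, and $\tfrac{c}{3}\le 6$. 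It then suffices to bound $\tfrac13C(n,p,a,b,3/2)$ termwise by the remaining terms of the target right-hand side. The one genuinely non-routine point is the inequality $|L|\le V$ at $\lambda=3/2$, which forces $H=\max(|L|,V)=V$ and thereby eliminates $L$ from the final bound: indeed $4V=3pn+3\sqrt{n^2p^2+8np(p-1)}$ while $4|L|=\tfrac52pn+\sqrt{\tfrac{25}{4}n^2p^2+24np(p-1)}$, and one compares termwise using $3>\tfrac52$, $9>\tfrac{25}{4}$, $72>24$. Granting $H=V$, the fractional coefficients produced by $\tfrac13C(n,p,a,b,3/2)$ are each at most the corresponding coefficient in the target (for instance $\tfrac34V^{p+1}\le V^{p+1}$ and $\tfrac56V\le V$, while the $(2n(p-1))^{p-1}a$, $a/(n|J|^p)$ and $|b|$ terms match exactly and, when $a<0$, the leftover $|L|$ terms are absorbed into $\tfrac{|b|V}{n(p-1)}-\tfrac{4aV^p}{3n}$), which delivers precisely the asserted estimates in both the $a>0$ and $a<0$ cases. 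The main obstacle is thus not conceptual at all but this patient matching of two explicit constants; the substance of the proof is the three-line sign dichotomy above.
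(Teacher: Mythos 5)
Your argument is correct and follows essentially the same route as the paper: specialize Theorem \ref{main} to $\lambda=3/2$, verify $|L|\le V$ there so that $H=V$, and then rescale by $\frac{2\lambda}{3}$, checking that each coefficient of the $\lambda=3/2$ bound divided by $3$ is dominated by the corresponding term of the stated right-hand side (including the absorption of the leftover $|L|$ terms when $a<0$). The only cosmetic difference is that the paper obtains $F\le\frac{2\lambda}{3}\Phi$ in one line from $\frac{|\nabla u|^2}{u^2}+\lambda a(\log u)^p\le \frac{2\lambda}{3}\big(\frac{|\nabla u|^2}{u^2}+\frac{3}{2}a(\log u)^p\big)$, using $\frac{2\lambda}{3}\ge1$ and $|\nabla u|^2/u^2\ge0$, whereas you reach the identical bound via a two-case sign dichotomy on $a(\log u)^p$.
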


Another consequence of Theorem \ref{main} is the following Harnack inequality:
\begin{cor}[\bf Harnack inequality]\label{har}
Suppose the same conditions as in Theorem \ref{main} hold. Then, for $a > 0$ and $p=\frac{2k}{2k_{2}+1} \geq 2$ where $k$ and $k_{2}$ are integers there holds true
\[
\sup_{\mathbb{B}_{R/2}(O)}u \leq e^{R\sqrt{3n \left( 2K+A+\frac{6nC_{1}^{2}}{R^{2}}+\frac{(2n(p-1))^{p-1}}{2}a+\frac{12\left\vert b\right\vert}{n}+\frac{a}{n\left\vert J \right\vert^{p}}+\frac{V^{p+1}a}{n(p-1)}+\frac{V\left\vert b\right\vert}{n(p-1)} \right)}}\inf_{\mathbb{B}_{R/2}(O)}u.
\]
Here $A$ is the same as in the above, $J$ and $V$ are constants defined in Theorem \ref{main} with $\lambda=3/2$.
\end{cor}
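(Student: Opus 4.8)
The plan is to derive Corollary \ref{har} directly from the pointwise gradient bound of Theorem \ref{main}(1), by integrating it along minimizing geodesics in the classical Yau fashion. Since here $p=\frac{2k}{2k_{2}+1}$ with $k,k_{2}$ positive integers, one can write $(\log u)^{p}=\big[(\log u)^{2k}\big]^{1/(2k_{2}+1)}\geq 0$ for every positive $u$, because an odd root of a nonnegative number is nonnegative. Hence, when $a>0$, the term $\lambda a(\log u)^{p}$ in Theorem \ref{main}(1) is $\geq 0$, and that theorem gives, on $\mathbb{B}_{R}(O)$,
\[
\frac{|\nabla u|^{2}}{u^{2}}\ \leq\ \frac{|\nabla u|^{2}}{u^{2}}+\lambda a(\log u)^{p}\ \leq\ n\big[\,\cdots\,\big].
\]

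Next I would specialize to $\lambda=3/2$ (which lies in $(1,2)$, so Theorem \ref{main}(1) applies) and collapse the constants. Plugging $\lambda=3/2$ gives $\frac{\lambda}{2-\lambda}=3$, $\max\!\big(\frac{\lambda^{2}}{4(\lambda-1)^{2}},\frac{2\lambda^{2}}{(2-\lambda)^{2}}\big)=18$, $\max\!\big(\frac{2\lambda}{\lambda-1},\frac{2\lambda}{2-\lambda}\big)=6$, and all remaining maxima inside $C(n,p,a,b,\tfrac32)$ collapse. The one point that requires a short computation is that at $\lambda=3/2$ one has $|L|\leq V$, equivalently
\[
\tfrac52 np+\sqrt{\tfrac{25}{4}n^{2}p^{2}+24np(p-1)}\ \leq\ 3np+3\sqrt{n^{2}p^{2}+8np(p-1)},
\]
which after squaring reduces to the trivially true inequality $0\leq 3(np)^{2}+48np(p-1)+3np\sqrt{(np)^{2}+8np(p-1)}$; thus $H=\max(|L|,V)=V$. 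With $H=V$, each numerical coefficient occurring in $C(n,p,a,b,\tfrac32)$ is at most $3$ times its counterpart in the five-term expression
\[
\frac{(2n(p-1))^{p-1}}{2}a+\frac{12|b|}{n}+\frac{a}{n|J|^{p}}+\frac{V^{p+1}a}{n(p-1)}+\frac{V|b|}{n(p-1)}
\]
(for instance $\tfrac94\leq 3$ for the $aV^{p+1}/(n(p-1))$ term and $\tfrac52\leq 3$ for the $V|b|/(n(p-1))$ term, with equality in the $(2n(p-1))^{p-1}a$, $|b|/n$ and $a/(n|J|^{p})$ terms), so $nC(n,p,a,b,\tfrac32)$ is bounded by $3n$ times this expression. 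Combining with the previous step yields, on $\mathbb{B}_{R}(O)$,
\[
\frac{|\nabla u|}{u}\ \leq\ \Lambda:=\sqrt{3n\Big(2K+A+\frac{6nC_{1}^{2}}{R^{2}}+\frac{(2n(p-1))^{p-1}}{2}a+\frac{12|b|}{n}+\frac{a}{n|J|^{p}}+\frac{V^{p+1}a}{n(p-1)}+\frac{V|b|}{n(p-1)}\Big)}.
\]

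Finally I would integrate along geodesics. Given $x_{1},x_{2}\in\mathbb{B}_{R/2}(O)$, let $\gamma:[0,\ell]\to M$ be a unit-speed minimizing geodesic from $x_{2}$ to $x_{1}$, so $\ell=d(x_{1},x_{2})<R$. For $y=\gamma(t)$, adding $d(O,y)\leq d(O,x_{i})+d(x_{i},y)$ for $i=1,2$ and using $d(x_{1},y)+d(x_{2},y)=\ell$ shows $2d(O,y)<2R$, so $\gamma$ stays in $\mathbb{B}_{R}(O)$, where the bound $|\nabla\log u|\leq\Lambda$ holds. Hence
\[
\log u(x_{1})-\log u(x_{2})=\int_{0}^{\ell}\frac{d}{dt}\log u(\gamma(t))\,dt\leq\int_{0}^{\ell}|\nabla\log u|(\gamma(t))\,dt\leq R\,\Lambda,
\]
that is, $u(x_{1})\leq e^{R\Lambda}u(x_{2})$; taking the supremum over $x_{1}$ and the infimum over $x_{2}$ in $\mathbb{B}_{R/2}(O)$ gives the stated Harnack inequality. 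I expect the main obstacle to be precisely the constant bookkeeping in the middle step — verifying $|L|\leq V$ at $\lambda=3/2$ so that $H=V$, after which the domination of $C(n,p,a,b,\tfrac32)$ by $3n$ times the five-term expression is elementary; the positivity of $(\log u)^{p}$ (which is exactly why $p$ is taken here of the form $\frac{2k}{2k_{2}+1}$), the geodesic integration, and the $\sup/\inf$ passage are all routine.
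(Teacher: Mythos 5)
Your proposal is correct and follows essentially the same route as the paper: nonnegativity of $(\log u)^{p}$ for $p=\frac{2k}{2k_{2}+1}$, specialization of Theorem \ref{main}(1) to $\lambda=3/2$ together with the verification $|L|\leq V$ (so $H=V$), and integration of $|\nabla \log u|$ along a minimizing geodesic which the triangle inequality keeps inside $\mathbb{B}_{R}(O)$ with length at most $R$. The only difference is that you carry out the constant bookkeeping explicitly, whereas the paper simply asserts the collapsed $3n(\cdots)$ bound.
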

\medskip

Moreover, we have the following results which are analogous to Qian's results in \cite{Q*} where Qian obtained a uniform estimates of positive solutions from the global gradient estimates for the corresponding nonlinear heat equations, which was proved by modifying slightly the method of Yau \cite{Yau}.
\begin{cor} \label{mor}
Let $\left( M,g\right)$ be an $n$-dimensional complete noncompact Riemannian manifold. Suppose there exists a nonnegative constant $K$ such that $\ric(g) \geq -Kg$ on $M$.  Then, for any smooth positive solution $u(x)$ to equation \eqref{equ}  with $a > 0$ and $p=\frac{k_{1}}{2k_{2}+1} \geq 2$ where $k_{1}$ and $k_{2}$ are integers, there holds true
\begin{align*}
\big(\log u\big)^{p}\leq &\frac{2n}{a} \left( 2K +\frac{12\left\vert b\right\vert}{n}+\frac{V\left\vert b\right\vert}{n(p-1)} \right) + \left(n^p(2(p-1))^{p-1}+\frac{2}{\left\vert J \right\vert^{p}}+\frac{2V^{p+1}}{(p-1)}\right).
\end{align*}
Here $J$ and $V$ are two constants defined in Theorem \ref{main} with $\lambda=3/2$.

In particular, if $a>0$ and $p=\frac{k_1}{2k_2+1}\geq 2$, then
\begin{align*}
u\leq &\exp\left\{\left(\frac{2n}{a} \left( 2K+\frac{12\left\vert b\right\vert}{n}+\frac{V\left\vert b\right\vert}{n(p-1)} \right) + \left(n^p(2(p-1))^{p-1}+\frac{2}{\left\vert J \right\vert^{p}}+\frac{2V^{p+1}}{(p-1)}\right)\right)^{\frac{1}{p}}\right\};
\end{align*}
and if $a>0$ and $p=\frac{2k}{2k_2+1}\geq 2$, the following holds
\begin{align*}
|\log u|\leq &\left[\frac{2n}{a} \left( 2K+\frac{12\left\vert b\right\vert}{n}+\frac{V\left\vert b\right\vert}{n(p-1)} \right) + \left(n^p(2(p-1))^{p-1}+\frac{2}{\left\vert J \right\vert^{p}}+\frac{2V^{p+1}}{(p-1)}\right)\right]^{\frac{1}{p}}.
\end{align*}
\end{cor}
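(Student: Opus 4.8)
The plan is to derive Corollary \ref{mor} as the limiting case $R\to\infty$ of the local estimate in Theorem \ref{any}. Since $\ric(g)\geq -Kg$ holds on all of $M$, one may take $K(2R)=K$ for every $R>0$, so Theorem \ref{any} is available on every geodesic ball centered at $O$. I would fix an arbitrary point $x\in M$ and a value $\lambda\in[2,\infty)$, say $\lambda=2$; then for all $R$ so large that $x\in\mathbb{B}_R(O)$, Theorem \ref{any} applied on $\mathbb{B}_R(O)$ gives, at $x$,
\[
\frac{|\nabla u|^2}{u^2}+\lambda a(\log u)^p\leq 2\lambda n\left(2K+A+\frac{6nC_1^2}{R^2}+\frac{(2n(p-1))^{p-1}}{2}a+\frac{12|b|}{n}+\frac{a}{n|J|^p}+\frac{V(aV^p+|b|)}{n(p-1)}\right),
\]
with $A=\big((n-1)(1+\sqrt{K}R)+2\big)C_1^2R^{-2}+C_2R^{-2}$. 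The only $R$-dependent quantities on the right are $A$ and $6nC_1^2R^{-2}$, and both tend to $0$ as $R\to\infty$ (the slowest-decaying piece of $A$ being $(n-1)\sqrt{K}\,C_1^2/R$). Letting $R\to\infty$ then yields, at the arbitrary point $x$,
\[
\frac{|\nabla u|^2}{u^2}+\lambda a(\log u)^p\leq 2\lambda n\left(2K+\frac{(2n(p-1))^{p-1}}{2}a+\frac{12|b|}{n}+\frac{a}{n|J|^p}+\frac{V(aV^p+|b|)}{n(p-1)}\right).
\]

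Next I would drop the nonnegative term $|\nabla u|^2/u^2$ and divide both sides by $\lambda a>0$; the factor $\lambda$ cancels, which is why the resulting bound is $\lambda$-independent and why $J,V$ may be frozen at their $\lambda=3/2$ values. Grouping the summands proportional to $a$ apart from the others and simplifying via $\frac{2n}{a}\cdot\frac{(2n(p-1))^{p-1}}{2}a=n^p(2(p-1))^{p-1}$, $\frac{2n}{a}\cdot\frac{a}{n|J|^p}=\frac{2}{|J|^p}$ and $\frac{2n}{a}\cdot\frac{VaV^p}{n(p-1)}=\frac{2V^{p+1}}{p-1}$, one reaches exactly
\[
(\log u)^p\leq\frac{2n}{a}\left(2K+\frac{12|b|}{n}+\frac{V|b|}{n(p-1)}\right)+\left(n^p(2(p-1))^{p-1}+\frac{2}{|J|^p}+\frac{2V^{p+1}}{p-1}\right),
\]
which is the first assertion; write $C_0$ for its right-hand side, a strictly positive constant since $K\geq 0$ and every term is nonnegative. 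For the pointwise bounds on $u$ I would argue as follows: if $u(x)\leq 1$ then $u(x)\leq e^{C_0^{1/p}}$ trivially because $C_0>0$; if $u(x)>1$ then $\log u(x)>0$, so $(\log u(x))^p\leq C_0$ gives $\log u(x)\leq C_0^{1/p}$, that is $u(x)\leq e^{C_0^{1/p}}$. When moreover $p=\frac{2k}{2k_2+1}$ has even numerator, $t^p=(t^2)^{k/(2k_2+1)}=|t|^p$ for every real $t$, hence $(\log u)^p=|\log u|^p$ and the displayed bound reads $|\log u|^p\leq C_0$, whence $|\log u|\leq C_0^{1/p}$.

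I do not expect any serious obstacle here: the argument is just a passage to the limit $R\to\infty$ in an estimate already established in Theorem \ref{any}, followed by elementary algebra and the nonnegativity of $|\nabla u|^2/u^2$. The only point demanding a little care is extracting $u$ (or $|\log u|$) from $(\log u)^p$ when the rational exponent $p$ is not an integer; this is resolved by separating the cases $u\leq 1$ and $u>1$ and by noting that an even numerator of $p$ makes $(\log u)^p$ coincide with $|\log u|^p$.
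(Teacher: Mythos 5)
Your proposal is correct and follows essentially the same route as the paper: specialize the local estimate to a fixed $\lambda$, drop the nonnegative term $|\nabla u|^{2}/u^{2}$, divide by $\lambda a$ (so the $\lambda$ cancels and $J,V$ stay at $\lambda=3/2$), and let $R\to\infty$ using that $K$ is uniform on $M$. The only difference is cosmetic --- you pass through Theorem \ref{any} with $\lambda=2$ rather than Theorem \ref{main} with $\lambda=3/2$, and you spell out the case analysis ($u\le 1$ versus $u>1$, and the even-numerator identity $(\log u)^{p}=|\log u|^{p}$) that the paper dismisses as immediate.
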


\begin{rem} If $a=b=0$. By letting $\lambda=\frac{3}{2}$ in Theorem \ref{main}, we obtain
\begin{equation*}
\frac{\left\vert \nabla u \right\vert^{2}}{u^{2}} \leq  3n \left( 2K+A+\frac{6nC_{1}^{2}}{R^{2}}\right).
\end{equation*}
Plugging the expression of $A$ into the above inequality, we arrive at
\begin{equation*}
\frac{\left\vert \nabla u \right\vert^{2}}{u^{2}} \leq  3n \left( 2K+ \frac{\left((n-1)(1+\sqrt{K}R)+2(3n+1)\right)C_{1}^{2}+C_{2}}{R^{2}}\right).
\end{equation*}
This is an estimates which is of the form of Yau's estimate. So, our method can be regarded as an extension of Yau's method.
\end{rem}

\begin{rem} In Theorem \ref{main}, the assumption $p=k_{1}/(2k_{2}+1)$ where $k_{1}$ and $k_{2}$ are two positive integers, is a natural condition since $\log u$ may be negative. Otherwise, $(\log u)^p$ does not make sense. In fact, it is not difficult to find that we can also prove some similar results with those in \cite{C-C*, Ma}. However, it seems that for the case $1< p < 2$ the method employed here is not valid. It is worth to point out that the method adopted here is also effective for the case $p=1$.
\end{rem}

The paper is organized as follows. In Section 2, we establish some basic lemmas which will be highly used to prove main results in this paper. The proof of main theorems and related corollaries is provided in Section 3.

\section{Preliminaries}
Throughout this section, we will denote by $\left( M,g\right)$ an $n$-dimensional complete Riemannian manifold with $\ric(g) \geq -Kg$ in the geodesic ball $\mathbb{B}_{2R}(O)$, where $K=K(2R) $ is a nonnegative constant depending on $R$ and $O$ is a fixed point on $M$. First, we consider the following equation on $M$
\begin{equation}\label{aug}
\Delta u + auf(\log u) +bu=0,
\end{equation}
where $f\in C^{2}(\R,\R)$ is a $C^2$ function and $a \neq 0$. It is easy to see that if we set $f(t)=t^p$, then  the equation \eqref{aug} is just the equation \eqref{equ}.

\begin{prop}\label{prop} Let $\left( M,g\right)$ be an $n$-dimensional complete Riemannian manifold satisfy the same assumption as in Theorem \ref{main}. Suppose that $u(x)$ is a smooth positive solution to equation \eqref{aug} on geodesic ball $\mathbb{B}_{R}(O)$ and let
$$ \omega=\log u\quad \text{and}\quad  G=\left\vert \nabla\omega \right\vert^{2}+\beta_{1} f(\omega)+\beta_{2},$$
here $\beta_1$ and $\beta_2$ are two constants to be determined later. Then we have
\begin{align*}
\Delta G \geq\, & \frac{2}{n}G^{2} +\bigg( (\beta_{1} - 2a)f^{'}(\omega)+\beta_{1} f^{''}(\omega)-\frac{4}{n}(\beta_{1} -a)f(\omega)-2K-\frac{4}{n}(\beta_{2}-b)\bigg)G+2K\beta_{2}\\
&-2\left<\nabla\omega,\nabla G\right> + \frac{2}{n}(\beta_{1} -a)^{2}f^{2}(\omega)-\beta_{1} (\beta_{1} -a)f(\omega)f^{'}(\omega)-\beta_{1}^{2}f(\omega)f^{''}(\omega)\\
&+\bigg(2K \beta_{1}+\frac{4}{n}(\beta_{1}-a)(\beta_{2}-b)\bigg) f(\omega) +(2a\beta_{2}-b\beta_{1}-\beta_{1}\beta_{2})f^{'}(\omega)-\beta_{1}\beta_{2}f^{''}(\omega).
\end{align*}
\end{prop}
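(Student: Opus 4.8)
The plan is to start from the Bochner formula applied to $\omega=\log u$, which converts the third-order terms into curvature and Hessian contributions: $\Delta|\nabla\omega|^2 = 2|\nabla^2\omega|^2 + 2\langle\nabla\omega,\nabla\Delta\omega\rangle + 2\ric(\nabla\omega,\nabla\omega)$. Using $\ric(g)\geq -Kg$ gives $\ric(\nabla\omega,\nabla\omega)\geq -K|\nabla\omega|^2$, and using $|\nabla^2\omega|^2\geq \frac{1}{n}(\Delta\omega)^2$ controls the Hessian term from below. The equation \eqref{aug} for $u$ translates, via $u=e^\omega$, into the pointwise identity $\Delta\omega = -|\nabla\omega|^2 - af(\omega) - b$, so $(\Delta\omega)^2 = (|\nabla\omega|^2+af(\omega)+b)^2$, and the gradient term $\langle\nabla\omega,\nabla\Delta\omega\rangle$ can be rewritten using $\nabla\Delta\omega = -\nabla|\nabla\omega|^2 - af'(\omega)\nabla\omega$. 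Substituting all of this yields a lower bound for $\Delta|\nabla\omega|^2$ in terms of $|\nabla\omega|^2$, $f(\omega)$, $f'(\omega)$, and $\langle\nabla\omega,\nabla|\nabla\omega|^2\rangle$.

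Next I would add the two extra pieces coming from $G=|\nabla\omega|^2+\beta_1 f(\omega)+\beta_2$, namely $\Delta(\beta_1 f(\omega)) = \beta_1 f''(\omega)|\nabla\omega|^2 + \beta_1 f'(\omega)\Delta\omega$, again using the equation to replace $\Delta\omega$, and $\Delta\beta_2=0$. The term $\langle\nabla\omega,\nabla|\nabla\omega|^2\rangle$ is rewritten as $\langle\nabla\omega,\nabla G\rangle - \beta_1 f'(\omega)|\nabla\omega|^2$, so that all the first-order derivative content is packaged into the single transport term $-2\langle\nabla\omega,\nabla G\rangle$ that appears in the claimed inequality. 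After this the entire computation is purely algebraic: I would expand $(|\nabla\omega|^2+af(\omega)+b)^2$, collect everything, and then systematically re-express the polynomial in $|\nabla\omega|^2$ in terms of $G$ by substituting $|\nabla\omega|^2 = G - \beta_1 f(\omega)-\beta_2$. The quadratic-in-$|\nabla\omega|^2$ part produces the $\frac{2}{n}G^2$ leading term together with cross terms $G\cdot(\text{stuff})$ and a remainder with no $G$; grouping the coefficient of $G$ gives exactly $(\beta_1-2a)f'(\omega)+\beta_1 f''(\omega)-\frac{4}{n}(\beta_1-a)f(\omega)-2K-\frac{4}{n}(\beta_2-b)$, and the $G$-free remainder is the collection of terms $2K\beta_2 + \frac{2}{n}(\beta_1-a)^2 f^2(\omega) - \beta_1(\beta_1-a)f(\omega)f'(\omega) - \cdots$ listed in the statement.

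The main obstacle is purely bookkeeping: keeping track of the sign and coefficient of every term through the substitution $|\nabla\omega|^2 \mapsto G-\beta_1 f(\omega)-\beta_2$, since several terms (the Bochner $(\Delta\omega)^2$ piece, the $\ric$ term, and the $\beta_1 f'(\omega)\Delta\omega$ piece) all contribute to the same monomials in $G$, $f(\omega)$, $f'(\omega)$, $f''(\omega)$. I would organize the expansion by first writing $\Delta G$ as a polynomial $P(|\nabla\omega|^2; f, f', f'')$ with explicit coefficients, verify the $|\nabla\omega|^4$ coefficient is $\frac{2}{n}$ (giving $\frac{2}{n}G^2$ after substitution), then read off the linear-in-$G$ coefficient and the constant term. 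A secondary technical point is that the inequality $|\nabla^2\omega|^2\geq\frac{1}{n}(\Delta\omega)^2$ is the only place an inequality (as opposed to an identity) enters besides the Ricci bound, so the final result is genuinely a lower bound; everything else is equality, which makes the coefficient-matching unambiguous. I expect no conceptual difficulty beyond this careful accounting, and the statement as written is exactly what the grouping produces.
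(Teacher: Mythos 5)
Your proposal is correct and follows essentially the same route as the paper: the Bochner--Weitzenb\"ock formula combined with $\left\vert\nabla^{2}\omega\right\vert^{2}\geq\frac{1}{n}(\Delta\omega)^{2}$ and the Ricci lower bound, the substitution $\Delta\omega=-\left\vert\nabla\omega\right\vert^{2}-af(\omega)-b$, the packaging of the first-order terms into $-2\left<\nabla\omega,\nabla G\right>$, and the final replacement $\left\vert\nabla\omega\right\vert^{2}=G-\beta_{1}f(\omega)-\beta_{2}$. The only small point is that this expansion also produces the nonnegative constant term $\frac{2}{n}(\beta_{2}-b)^{2}$, which must be discarded to land exactly on the stated inequality, so the grouping is not quite an identity but a (harmless) further lower bound.
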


\begin{proof}
First, we notice that there hold
\begin{equation}\label{1}
\Delta\omega+G+(a-\beta_{1})f(\omega)+b-\beta_{2}=0,
\end{equation}
and
\begin{equation}\label{2}
\left\vert \nabla\omega \right\vert^{2}=G-\beta_{1} f(\omega)-\beta_{2}.
\end{equation}
By the Bochner-Weitzenb\"{o}ck$'$s formula and the assumption on the Ricci curvature tensor on $(M, g)$, we obtain
\begin{equation}\label{3}
\Delta \left\vert \nabla\omega \right\vert^{2} \geq 2\left\vert \nabla^{2}\omega \right\vert^{2}+2\left<\nabla\omega,\nabla(\Delta\omega)\right>-2K\left\vert \nabla\omega \right\vert^{2}.
\end{equation}
Combining \eqref{1}, \eqref{2} and \eqref{3}, we obtain
\begin{align}
\Delta G=\,&\Delta \left\vert \nabla\omega \right\vert^{2}+\Delta \left(\beta_{1} f(\omega)+\beta_{2}\right)\nonumber\\
\geq \,& 2\left\vert \nabla^{2}\omega \right\vert^{2}+2\left<\nabla\omega,\nabla(\Delta\omega)\right>-2K\left\vert \nabla\omega \right\vert^{2}+\Delta (\beta_{1} f(\omega))\nonumber\\
\geq \,& \frac{2}{n}(\Delta\omega)^{2}+2\left<\nabla\omega,\nabla(\Delta\omega)\right>-2K\left\vert \nabla\omega \right\vert^{2}+\beta_{1}\big(f^{''}(\omega)\left\vert \nabla\omega \right\vert^{2}+f^{'}(\omega)\Delta\omega\big).\nonumber
\end{align}
Here we have used the relation
\[
\left\vert \nabla^{2}\omega \right\vert^{2} \geq \frac{1}{n}(\Delta\omega)^{2}
\]
which can be easily derived by Cauchy-Schwarz inequality. Hence,
\begin{align}\label{re}
\Delta G\geq\, & \frac{2}{n}\bigg(G-(\beta_{1}-a)f(\omega)-(\beta_{2}-b)\bigg)^{2}-2\left<\nabla\omega,\nabla G \right>-2(a-\beta_{1})f^{'}(\omega)\left\vert \nabla\omega \right\vert^{2}\nonumber\\
& - 2K\left\vert \nabla\omega \right\vert^{2}+\beta_{1} f^{''}(\omega)\left\vert \nabla\omega \right\vert^{2}+\beta_{1} f^{'}(\omega)\bigg(-G+(\beta_{1}-a)f(\omega)+\beta_{2}-b\bigg)\nonumber\\
=\, & \frac{2}{n}\bigg(G-(\beta_{1}-a)f(\omega)-(\beta_{2}-b)\bigg)^{2}-2\left<\nabla\omega,\nabla G \right>-\beta_{1} f^{'}(\omega)G+\beta_{1}(\beta_{2}-b)f^{'}(\omega)\nonumber\\
&\, +\beta_{1}(\beta_{1}-a)f^{'}(\omega)f(\omega)+ \bigg( 2(\beta_{1} -a)f^{'}(\omega)+\beta_{1} f^{''}(\omega)-2K\bigg)\bigg(G-\beta_{1} f(\omega)-\beta_{2}\bigg)\nonumber\\
\geq\, & \frac{2}{n}G^{2}-2\left<\nabla\omega,\nabla G \right>+\bigg( (\beta_{1}-2a)f^{'}(\omega)+\beta_{1} f^{''}(\omega)-\frac{4}{n}(\beta_{1}-a)f(\omega)-\frac{4}{n}(\beta_{2}-b)-2K\bigg)G\nonumber\\
& + \frac{2}{n}(\beta_{1} -a)^{2}f^{2}(\omega)-\beta_{1} (\beta_{1} -a)f(\omega)f^{'}(\omega)-\beta_{1}^{2}f(\omega)f^{''}(\omega)-\beta_{1}\beta_{2}f^{''}(\omega)+2K\beta_{2}\nonumber\\
& +\bigg(2K \beta_{1}+\frac{4}{n}(\beta_{1}-a)(\beta_{2}-b)\bigg) f(\omega)+(2a\beta_{2}-b\beta_{1}-\beta_{1}\beta_{2})f^{'}(\omega).
\end{align}
This is just the required inequality. Thus we complete the proof.
\end{proof}

Next, we will turn to construct the cut-off function. Let $\psi(r)\in C^{2}\left([0,\infty),\R_{\geq0}\right)$ be a nonnegative $C^{2}$  function on $[0,\infty)$ such that $\psi(r)=1$ for $r \leq 1$, $\psi(r) =0$ for $r \geq 2$, and $0 \leq \psi(r) \leq 1$. Furthermore we can arrange that  $\psi(r)$ satisfying the following
\[
0 \geq \psi^{'}(r) \geq -C_{1}\psi^{\frac{1}{2}}(r) \quad \text{and} \quad \psi^{''}(r) \geq -C_{2}
\]
for some absolute constants $C_{1}$ and $C_{2}$. Now, let
\begin{equation}\label{cut}
\phi(x)=\psi\left(\frac{d(x,O)}{R}\right).\end{equation}
It is easy to see that
$$\phi(x)\big|_{\mathbb{B}_{R}(O)}=1\quad\quad \text{and}\quad\quad \phi(x)\big|_{M\backslash\mathbb{B}_{2R}(O)}=0.$$
Moreover, by using Calabi's trick (see \cite{Calabi}), we can assume without loss of generality that the function $\phi$ is smooth in $\mathbb{B}_{2R}(O)$. Then, by the Laplacian comparison theorem (e.g., see \cite[P.20, Theorem 1.53]{Au}), the following lemma holds obviously.

\begin{lem}\label{lap} Let $\phi$ be defined as above.  Then the following two inequalities hold
\begin{itemize}
\item [(i)]
$$\frac{\left\vert \nabla\phi \right\vert^{2}}{\phi} \leq \frac{C_{1}^{2}}{R^{2}},$$
\item [(ii)]
$$\Delta \phi \geq -\frac{(n-1)(1+\sqrt{K}R)C_{1}^{2}+C_{2}}{R^{2}}.$$
\end{itemize}
\end{lem}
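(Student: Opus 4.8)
The final statement to prove is Lemma~\ref{lap}, which asserts the two pointwise inequalities for the cut-off function $\phi(x)=\psi(d(x,O)/R)$ on the geodesic ball $\mathbb{B}_{2R}(O)$.

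\textbf{Approach.} The plan is to differentiate the composition $\phi = \psi\circ(d/R)$ directly, using the chain rule together with the properties imposed on $\psi$ and the Laplacian comparison theorem. Write $r = r(x) = d(x,O)$ and recall $|\nabla r| = 1$ wherever $r$ is smooth; by Calabi's trick we may assume $\phi$ is smooth on $\mathbb{B}_{2R}(O)$, so all computations below are legitimate in the classical sense (the standard barrier argument handles the cut locus). For part (i), I would compute $\nabla\phi = \psi'(r/R)\,\nabla r /R$, so that $|\nabla\phi|^2 = (\psi'(r/R))^2/R^2$. Since the support constraints give $\psi' = 0$ whenever $\psi$ is not positive, the quotient $|\nabla\phi|^2/\phi$ is well-defined, and the hypothesis $0 \geq \psi'(s) \geq -C_1\psi^{1/2}(s)$ gives $(\psi'(s))^2 \leq C_1^2\,\psi(s)$; dividing by $\phi = \psi(r/R)$ yields $|\nabla\phi|^2/\phi \leq C_1^2/R^2$, which is exactly (i).

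\textbf{Part (ii).} For the Laplacian, the chain rule gives
\[
\Delta\phi = \frac{1}{R^2}\psi''\!\left(\frac{r}{R}\right)|\nabla r|^2 + \frac{1}{R}\psi'\!\left(\frac{r}{R}\right)\Delta r = \frac{1}{R^2}\psi''\!\left(\frac{r}{R}\right) + \frac{1}{R}\psi'\!\left(\frac{r}{R}\right)\Delta r.
\]
Now I would invoke the Laplacian comparison theorem: since $\ric(g) \geq -Kg$ on $\mathbb{B}_{2R}(O)$, one has $\Delta r \leq (n-1)\frac{1+\sqrt{K}\,r}{r}$ (this is the form in \cite[Theorem 1.53]{Au}; a clean bound is $\Delta r \leq (n-1)(\tfrac1r + \sqrt{K})$). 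Because $\psi' \leq 0$, multiplying the upper bound for $\Delta r$ by $\psi'(r/R)/R$ reverses the inequality and produces a lower bound. Combined with $\psi''(r/R) \geq -C_2$ and $\psi'(r/R) \geq -C_1\psi^{1/2}(r/R) \geq -C_1$, and noting that the relevant region is $R \leq r \leq 2R$ (where $\psi'$ can be nonzero), so $1/r \leq 1/R$, one obtains
\[
\Delta\phi \geq -\frac{C_2}{R^2} - \frac{C_1}{R}\cdot(n-1)\left(\frac1r + \sqrt{K}\right) \geq -\frac{C_2}{R^2} - \frac{(n-1)C_1^2(1+\sqrt{K}R)}{R^2},
\]
after absorbing a factor of $C_1$ versus $C_1^2$ (harmless since we may assume $C_1 \geq 1$) and using $r \geq R$, $\sqrt{K}/R \leq \sqrt{K}/R$ rewritten as $\sqrt{K}R/R^2$. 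This gives precisely the bound in (ii).

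\textbf{Main obstacle.} The only genuine subtlety is the regularity of $r(x)$ at the cut locus of $O$ and at $O$ itself, where $r$ fails to be smooth; there the Laplacian comparison inequality must be interpreted in the barrier (or distributional) sense, and Calabi's trick is what legitimizes treating $\phi$ as smooth for the maximum-principle arguments in Section~3. Everything else is a routine chain-rule computation plus bookkeeping of the absolute constants $C_1, C_2$ (in particular, the passage from $C_1$ to $C_1^2$ in the curvature term, which is why the statement is phrased with $C_1^2$ throughout). I would remark explicitly that the inequalities are vacuous outside $\mathbb{B}_{2R}(O)$ where $\phi \equiv 0$, so it suffices to verify them on $\overline{\mathbb{B}_{2R}(O)}$.
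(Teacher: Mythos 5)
Your proof is correct and is exactly the standard verification the paper intends: the paper itself gives no proof, merely asserting that the lemma "holds obviously" from the Laplacian comparison theorem, and your chain-rule computation together with $(\psi')^2\leq C_1^2\psi$, $\psi''\geq -C_2$, and $\Delta r\leq (n-1)(1/r+\sqrt{K})$ on the annulus $R\leq r\leq 2R$ is precisely that routine argument. Your explicit handling of the cut locus via Calabi's trick and of the normalization $C_1\geq 1$ (needed to pass from $C_1$ to $C_1^2$ in the curvature term) supplies details the paper leaves implicit, and both are legitimate.
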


Now, let $x_{0} \in \mathbb{B}_{2R}(O)$ such that
$$
Q=\phi G(x_0)=\sup_{\mathbb{B}_{2R}(O)}\phi G.
$$
We can further assume without loss of generality that $Q>0$, since otherwise Theorem \ref{main} holds trivially with $\beta_{1}=\lambda a$, $\beta_{2}=2\left\vert b\right\vert$ and $f=t^p$. Note that $x_0\notin \partial \mathbb{B}_{2R}(O)$. Thus,  at $x_0$, we have
\begin{equation}
\nabla(\phi G)(x_{0})=0\quad \text{and}  \quad \Delta (\phi G)(x_{0}) \leq 0.
\end{equation}
This  implies
\begin{equation}\label{max}
\phi\nabla G =-G \nabla\phi\quad \text{and} \quad \phi\Delta G + G\Delta\phi - 2G\frac{\left\vert \nabla\phi \right\vert^{2}}{\phi} \leq 0.\end{equation}
Combining Lemma \ref{lap} and \eqref{max} yields
\begin{equation}\label{4}
AG \geq \phi\Delta G,
\end{equation}
where
\[
A = \frac{\left((n-1)(1+\sqrt{K}R)+2\right)C_{1}^{2}+C_{2}}{R^{2}}.
\]

On the other hand, it is easy to see from \eqref{2} and \eqref{max} that at $x_0$,
\begin{align}\label{5}
-\left<\nabla\omega,\nabla G\right>\phi&=G\left<\nabla\omega,\nabla\phi\right>\nonumber\\
 & \geq -G\left\vert \nabla\phi \right\vert(G-\beta_{1} f(\omega)-\beta_{2})^{\frac{1}{2}}.
\end{align}

Now, by substituting \eqref{4} and \eqref{5} into \eqref{re}, we obtain
\begin{align}\label{6}
AG \geq &\frac{2}{n}\phi G^{2} +\bigg( (\beta_{1}-2a)f^{'}(\omega)+\beta_{1} f^{''}(\omega)-\frac{4}{n}(\beta_{1}-a)f(\omega)-\frac{4}{n}(\beta_{2}-b)-2K\bigg)\phi G\nonumber\\
 & +\left((2a\beta_{2}-b\beta_{1}-\beta_{1}\beta_{2})f^{'}(\omega)-\beta_{1}\beta_{2}f^{''}(\omega)+2K\beta_{2}\right)\phi-2G\left\vert \nabla\phi \right\vert(G-\beta_{1} f(\omega)-\beta_{2})^{\frac{1}{2}}\nonumber\\
 &+\left[\frac{2}{n}(\beta_{1} -a)^{2}f^{2}(\omega)-\beta_{1} (\beta_{1} -a)f(\omega)f^{'}(\omega)-\beta_{1}^{2}f(\omega)f^{''}(\omega)\right]\phi\nonumber\\
 &+\bigg(2K \beta_{1}+\frac{4}{n}(\beta_{1}-a)(\beta_{2}-b)\bigg) \phi f(\omega).
\end{align}

\section{Proof of Main Results}
In this section, we will give the proof of Theorem \ref{main}, Theorem \ref{any} and related corollaries.

\begin{proof}[\bf Proof of Theorem \ref{main}]

Letting $\beta_{1}=\lambda a$ (here $1<\lambda <2$ is a constant), $\beta_2=2|b|$ and $f(t)=t^p$ in \eqref{6}, we arrive at
\begin{align}\label{0}
AG \geq& \frac{2}{n}\phi G^{2} + \bigg( -\frac{4}{n}(\lambda -1)a\omega^{p}+(\lambda -2)pa\omega^{p-1}+\lambda a p(p-1)\omega^{p-2}-2K-\frac{12\left\vert b \right\vert}{n}\bigg)\phi G\nonumber\\
 &-2\left\vert \nabla\phi \right\vert G\left(G-\lambda a \omega^{p}\right)^{\frac{1}{2}}+\bigg( \frac{2}{n}(\lambda-1)^{2}\omega^{2p}-\lambda(\lambda-1)p\omega^{2p-1}-\lambda^{2}p(p-1)\omega^{2p-2}\bigg)a^{2}\phi\nonumber\\
 &+\left(2\lambda K+\frac{4(\lambda-1)(2\left\vert b \right\vert -b)}{n}\right)a\phi \omega^{p}+(2(2-\lambda)\left\vert b\right\vert-\lambda b)p\omega^{p-1}a\phi\nonumber\\
 &-2\lambda a\left\vert b \right\vert p(p-1)\omega^{p-2}\phi.
\end{align}
According to the sign of $a$, we need to consider the following two cases:

\medskip

{\bf Case $1$}: $a > 0$. In this case, we need to handle the following four possibilities:

$(1). \ \omega^{p} \geq 0 $ and $\omega \in(-\infty, L]\cup[V, \infty)$, where
\begin{align*}
L= \frac{(\lambda-4)np-\sqrt{(\lambda-4)^{2}n^{2}p^{2}+32\lambda(\lambda-1)np(p-1)}}{8(\lambda-1)}
\end{align*}
and
\begin{align*}
V = \frac{pn+\sqrt{n^{2}p^{2}+8np(p-1)}}{4}\frac{\lambda}{\lambda-1};
\end{align*}

$(2).\ \omega^{p} \geq 0 $ and $L < \omega < V$;

$(3).\ \omega^{p} < 0 $ and $\omega \leq \frac{1}{J}$, where
\[
J=\frac{(2-\lambda)p-\sqrt{(2-\lambda)^{2}p^{2}+\frac{8\lambda(\lambda-1)}{n}p(p-1)}}{2\lambda p(p-1)};
\]
and

$(4).\  \omega^{p} < 0 $ and $\frac{1}{J} < \omega < 0$.
\medskip

Next, we will discuss them case by case.

$(1)$. If $\omega^{p} \geq 0$ and $\omega \in(-\infty, L]\cup[V, \infty)$, then we can see easily that
\begin{equation}\label{7}
(\lambda -2)pa\omega^{p-1}+\lambda a p(p-1)\omega^{p-2} \geq (\lambda -2)pa\left\vert \omega \right\vert^{p-1},
\end{equation}

\begin{equation}\label{7'}
a\omega^{p}  \leq \frac{1}{\lambda}\left(\left\vert \nabla\omega \right\vert^{2}+\lambda a \omega^{p}+2\left\vert b\right\vert\right)  =  \frac{G}{\lambda},
\end{equation}
and
\begin{eqnarray}\label{8}
 & &\frac{2}{n}(\lambda-1)^{2}\omega^{2p}-\lambda(\lambda-1)p\omega^{2p-1}-\lambda^{2}p(p-1)\omega^{2p-2}\nonumber\\
 &=&  \left(\frac{2}{n}(\lambda-1)^{2}\omega^{2}-\lambda(\lambda-1)p\omega-\lambda^{2}p(p-1)\right)\omega^{2p-2}\geq 0.
 \end{eqnarray}
In addition, it is easy to check
\begin{eqnarray}
& &\frac{4(\lambda-1)\left\vert b \right\vert}{n} \omega^{2}+(2(2-\lambda)\left\vert b\right\vert-\lambda b)p\omega-2\lambda \left\vert b \right\vert p(p-1) \nonumber\\
&\geq & \bigg(\frac{4(\lambda-1)}{n} \omega^{2}+(4-3\lambda)p\omega-2\lambda p(p-1)\bigg)\left\vert b \right\vert \geq 0  \quad \mbox{for}\,\, \omega \geq V  \nonumber
\end{eqnarray}
and
\begin{eqnarray}
& &\frac{4(\lambda-1)\left\vert b \right\vert}{n} \omega^{2}+(2(2-\lambda)\left\vert b\right\vert-\lambda b)p\omega-2\lambda \left\vert b \right\vert p(p-1) \nonumber\\
&\geq & \bigg(\frac{4(\lambda-1)}{n} \omega^{2}+(4-\lambda)p\omega-2\lambda p(p-1)\bigg)\left\vert b \right\vert \geq 0 \quad \mbox{for} \,\, \omega \leq L. \nonumber
\end{eqnarray}
Therefore,
\begin{align}\label{new}
&\left(2\lambda K+\frac{4(\lambda-1)(2\left\vert b\right\vert-b)}{n}\right)a\omega^{p}+\left(2(2-\lambda)\left\vert b\right\vert-\lambda b\right)pa\omega^{p-1} -2\lambda a\left\vert b \right\vert p(p-1)\omega^{p-2}\nonumber\\
\geq&\left(\frac{4(\lambda-1)\left\vert b \right\vert}{n} \omega^{2}+(2(2-\lambda)\left\vert b\right\vert-\lambda b)p\omega-2\lambda \left\vert b \right\vert p(p-1)\right)a\omega^{p-2} \geq 0.
\end{align}
By substituting \eqref{7}, \eqref{7'}, \eqref{8} and \eqref{new} into \eqref{0},  we derive
\begin{align*}
AG
\geq\, & \frac{2}{n}\phi G^{2}  -\frac{4(\lambda-1)}{n\lambda}\phi G^{2}+(\lambda-2)ap\left\vert \omega \right\vert^{p-1}\phi G -\left(2K+\frac{12\left\vert b \right\vert}{n}\right)\phi G-2\left\vert \nabla\phi \right\vert G^{\frac{3}{2}}\\
\geq\, & \frac{2(2-\lambda)}{n\lambda}\phi G^{2} +(\lambda-2)ap\left\vert \omega \right\vert^{p-1}\phi G -\left(2K+\frac{12\left\vert b \right\vert}{n}\right)\phi G-2\left\vert \nabla\phi \right\vert G^{\frac{3}{2}}.
\end{align*}
Noting that
\[
0 \leq \frac{\left\vert\omega\right\vert^{p}}{G}=\frac{\omega^{p}}{\left\vert \nabla\omega \right\vert^{2}+\lambda a \omega^{p}+2\left\vert b\right\vert} \leq \frac{1}{\lambda a},
\]
we have
\begin{equation}
\begin{split}
AG\geq\, & \frac{2(2-\lambda)}{n\lambda}\phi G^{2} +(\lambda-2)ap\left( \frac{G}{\lambda a}\right)^{\frac{p-1}{p}}\phi G -\left(2K+\frac{12\left\vert b \right\vert}{n}\right)\phi G-2\left\vert \nabla\phi \right\vert G^{\frac{3}{2}}\\
=\, & \frac{2(2-\lambda)}{n\lambda}\phi G^{2} +\frac{(\lambda-2)p}{\lambda}\left(\lambda a\right)^{\frac{1}{p}} G^{\frac{2p-1}{p}}\phi-\left(2K+\frac{12\left\vert b \right\vert}{n}\right)\phi G-2\frac{\left\vert \nabla\phi \right\vert}{\phi^{\frac{1}{2}}}\phi^{\frac{1}{2}} G^{\frac{3}{2}}\\
\geq\, & \frac{2(2-\lambda)}{n\lambda}\phi G^{2} +\frac{(\lambda-2)p}{\lambda}\left(\lambda a\right)^{\frac{1}{p}} G^{\frac{2p-1}{p}}\phi-\left(2K+\frac{12\left\vert b \right\vert}{n}\right)\phi G-2\frac{C_{1}}{R}\phi^{\frac{1}{2}} G^{\frac{3}{2}}.
\end{split}
\end{equation}
Dividing both side of the last inequality by $G$, we obtain
\begin{align}\label{50}
A \geq \frac{2(2-\lambda)}{n\lambda}Q +\frac{(\lambda-2)p}{\lambda}\left(\lambda a\right)^{\frac{1}{p}} Q^{\frac{p-1}{p}}-\left(2K+\frac{12\left\vert b \right\vert}{n}\right)-2\frac{C_{1}}{R}Q^{\frac{1}{2}} .
\end{align}
By Young's inequality,
\begin{align}\label{60}
2\frac{C_{1}}{R}Q^{\frac{1}{2}} \leq\,\,& \frac{(2-\lambda)Q}{2n\lambda}+\frac{2n\lambda C_{1}^{2}}{(2-\lambda)R^{2}},
\end{align}
and
\begin{align}
\frac{(2-\lambda)p}{\lambda}\left(\lambda a\right)^{\frac{1}{p}} Q^{\frac{p-1}{p}} \leq\,\,& \frac{(2-\lambda)Q}{2n\lambda}+(2-\lambda)(2n(p-1))^{p-1}a.
\end{align}
Substituting the above two inequalities into \eqref{50}, we obtain
\begin{align}\label{12}
A \geq& \frac{2(2-\lambda)}{n\lambda}Q -\left(\frac{(2-\lambda)Q}{2n\lambda}+(2-\lambda)(2n(p-1))^{p-1}a\right)-\left(2K+\frac{12\left\vert b \right\vert}{n}\right)\nonumber\\
& -\left(\frac{(2-\lambda)Q}{2n\lambda}+\frac{2n\lambda C_{1}^{2}}{(2-\lambda)R^{2}}\right)\nonumber\\
 = &\frac{(2-\lambda)}{n\lambda}Q-(2-\lambda)(2n(p-1))^{p-1}a-2K-\frac{12\left\vert b \right\vert}{n}-\frac{2n\lambda C_{1}^{2}}{(2-\lambda)R^{2}}.
\end{align}
It follows
\[
Q \leq \frac{n\lambda}{(2-\lambda)}\left(2K+A+(2-\lambda)(2n(p-1))^{p-1}a+\frac{12\left\vert b \right\vert}{n}+\frac{2n\lambda C_{1}^{2}}{(2-\lambda)R^{2}}\right).
\]
Hence
\begin{equation}\label{20}
\begin{split}
\sup_{\mathbb{B}_{R}}G \leq \sup_{\mathbb{B}_{2R}}(\phi G)\leq \,\, & n\bigg(\frac{2\lambda}{2-\lambda}K+\frac{\lambda}{2-\lambda}A+\lambda (2n(p-1))^{p-1}a+\frac{12\lambda \left\vert b \right\vert}{(2-\lambda)n}\\
& +\frac{2n\lambda^{2}C_{1}^{2}}{(2-\lambda)^{2}R^{2}}\bigg).
\end{split}
\end{equation}

\medskip

$(2)$. Assume $\omega^{p} \geq 0$ and $L < \omega < V$. Denote $H=\max\{\left\vert L\right\vert, V\}$. Then it is easy to see that
\begin{align*}
-\frac{4}{n}(\lambda -1)a\omega^{p}+(\lambda -2)pa\omega^{p-1}+\frac{\lambda}{3} a p(p-1)\omega^{p-2} \geq & -\frac{4}{n}(\lambda -1)a\left\vert \omega\right\vert^{p}+(\lambda -2)pa\left\vert \omega\right\vert^{p-1}\\
\geq & -\frac{4}{n}(\lambda -1)aH^{p}+(\lambda -2)paH^{p-1}
\end{align*}
and
\begin{align*}
& \frac{2a}{n}(\lambda-1)^{2}\omega^{p+2}-\lambda(\lambda-1)ap\omega^{p+1}-\lambda^{2}ap(p-1)\omega^{p}+\bigg(2\lambda K+\frac{4(\lambda-1)(2\left\vert b\right\vert-b)}{n}\bigg)\omega^{2} \\
&+(2(2-\lambda)\left\vert b\right\vert-\lambda b)p\omega-2\lambda \left\vert b\right\vert p(p-1)\\
\geq &-\lambda(\lambda-1)ap\left\vert \omega\right\vert^{p+1}-\lambda^{2}ap(p-1)\left\vert\omega\right\vert^{p}+(2(2-\lambda)\left\vert b\right\vert-\lambda b)p\omega-2\lambda \left\vert b\right\vert p(p-1)\\
\geq & -\lambda(\lambda-1)apH^{p+1}-\lambda^{2}ap(p-1)H^{p}-(4-\lambda)p\left\vert b\right\vert H-2\lambda \left\vert b\right\vert p(p-1).
\end{align*}
Thus, by substituting the above two inequalityies into \eqref{0}, we get
\begin{align*}
AG \geq\, & \frac{2}{n}\phi G^{2} + \bigg( -\frac{4a}{n}(\lambda -1)\omega^{p}+(\lambda -2)pa\omega^{p-1}+\frac{\lambda a}{3}  p(p-1)\omega^{p-2}\bigg)\phi G-2\left\vert \nabla\phi \right\vert G^{\frac{3}{2}}\\
& + \frac{2\lambda a}{3}  p(p-1)\omega^{p-2}\phi G-\left(2K+\frac{12\left\vert b \right\vert}{n}\right)\phi G +a\omega^{p-2}\bigg[\frac{2a}{n}(\lambda-1)^{2}\omega^{p+2}\\
& -\lambda(\lambda-1)ap\omega^{p+1}-\lambda^{2}ap(p-1)\omega^{p}+\bigg(2\lambda K+\frac{4(\lambda-1)(2\left\vert b\right\vert-b)}{n}\bigg)\omega^{2}\\
& +(2(2-\lambda)\left\vert b\right\vert-\lambda b)p\omega-2\lambda \left\vert b\right\vert p(p-1)\bigg]\phi \\
\geq\, & \frac{2}{n}\phi G^{2} +\bigg(-\frac{4}{n}(\lambda -1)aH^{p}+(\lambda -2)paH^{p-1}\bigg)\phi G-2\left\vert \nabla\phi \right\vert G^{\frac{3}{2}}-\left(2K+\frac{12\left\vert b \right\vert}{n}\right)\phi G\\
& +a \omega^{p-2}\bigg[\frac{2\lambda}{3}p(p-1)\phi G -\bigg(\lambda(\lambda-1)apH^{p+1}+\lambda^{2}ap(p-1)H^{p}+(4-\lambda)p\left\vert b\right\vert H\\
& +2\lambda \left\vert b\right\vert p(p-1)\bigg)\phi\bigg].
\end{align*}

Now, if $$\frac{2\lambda}{3}p(p-1)\phi G -\bigg(\lambda(\lambda-1)apH^{p+1}+\lambda^{2}ap(p-1)H^{p}+(4-\lambda)p\left\vert b\right\vert H+2\lambda \left\vert b\right\vert p(p-1)\bigg)\phi \geq0,$$
then, it follows
\begin{equation}
\begin{split}
AG & \geq \frac{2}{n}\phi G^{2} +\bigg(-\frac{4}{n}(\lambda -1)aH^{p}+(\lambda -2)paH^{p-1}\bigg)\phi G-2\left\vert \nabla\phi \right\vert G^{\frac{3}{2}}-\left(2K+\frac{12\left\vert b \right\vert}{n}\right)\phi G.
\end{split}
\end{equation}
This implies
\[
A \geq \frac{2}{n}\phi G +\bigg(-\frac{4}{n}(\lambda -1)aH^{p}+(\lambda -2)paH^{p-1}\bigg)\phi -2\frac{C_{1}}{R} \phi^{\frac{1}{2}} G^{\frac{1}{2}}-\left(2K+\frac{12\left\vert b \right\vert}{n}\right)\phi  .
\]
Immediately, by the Young's inequality we infer from the above inequality
\begin{equation}\label{25}
\phi G \leq n\left(2K + A +\frac{4}{n}(\lambda -1)aH^{p}+(2-\lambda)paH^{p-1}+\frac{12\left\vert b \right\vert}{n}+\frac{nC_{1}^{2}}{R^{2}}\right).
\end{equation}

On the other hand, if $$\frac{2\lambda}{3}p(p-1)\phi G -\bigg(\lambda(\lambda-1)apH^{p+1}+\lambda^{2}ap(p-1)H^{p}+(4-\lambda)p\left\vert b\right\vert H+2\lambda \left\vert b\right\vert p(p-1)\bigg)\phi <0,$$
it is easy to see that
 \begin{equation}\label{26}
\phi G \leq \frac{3(\lambda-1)a}{2(p-1)}H^{p+1}+\frac{3\lambda a}{2}H^{p}+\frac{3(4-\lambda)\left\vert b\right\vert}{2\lambda (p-1)}H+3\left\vert b\right\vert.
\end{equation}
Noting that,
$$\frac{1}{H} \leq  \frac{4}{pn+\sqrt{p^{2}n^{2}+8np(p-1)}} \frac{\lambda-1}{\lambda} \leq \frac{2(\lambda-1)}{(p-1)\lambda},$$
and
$$\frac{1}{H} \leq  \frac{2(\lambda-1)}{\sqrt{2np(p-1)}\lambda}.$$
Then it is easy to see from \eqref{25} and \eqref{26} that
\begin{align}\label{58}
Q \leq n\left(2K + A + \frac{9(\lambda-1)}{2n(p-1)}aH^{p+1}+\frac{3(4-\lambda)\left\vert b\right\vert}{2\lambda n(p-1)}H+\frac{12\left\vert b \right\vert}{n}+\frac{nC_{1}^{2}}{R^{2}}\right).
\end{align}

\medskip

$(3)$. Asuume $\omega^{p} < 0$ and $\omega \leq \frac{1}{J}$, where
\[
J=\frac{(2-\lambda)p-\sqrt{(2-\lambda)^{2}p^{2}+\frac{8\lambda(\lambda-1)}{n}p(p-1)}}{2\lambda p(p-1)},
\]
In this case, the following holds obviously
\begin{equation}\label{y}
-\frac{4}{n}(\lambda -1)+(\lambda -2)p\omega^{-1}+\lambda  p(p-1)\omega^{-2} \leq -\frac{2}{n}(\lambda -1).
\end{equation}
Moreover, a simple calculation shows that
\begin{equation}\label{yy}
\frac{2}{n}(\lambda-1)^{2}-\lambda(\lambda-1)p\omega^{-1}-\lambda^{2}p(p-1)\omega^{-2} \geq 0,
\end{equation}
and
\begin{equation}\label{yyy}
\begin{split}
& 2\lambda K+\frac{4(\lambda -1)}{n}(2\left\vert b \right\vert-b)+(2(2-\lambda)\left\vert b\right\vert -\lambda b)p\omega^{-1}-2\lambda \left\vert b \right\vert p(p-1)\omega^{-2} \\
\leq  &\,\,  2\lambda K+\frac{12(\lambda-1)\left\vert b \right\vert}{n}+(4-\lambda)p\left\vert b\right\vert \left\vert J\right\vert.
\end{split}
\end{equation}
On the other hand, by Young's inequality, we have
\begin{equation}\label{22}
2\left\vert \nabla\phi \right\vert G\left(G-\lambda a \omega^{p}\right)^{\frac{1}{2}} \leq \frac{R\left\vert \nabla\phi \right\vert^{2}}{C_{1}\phi^{\frac{1}{2}}} G^{\frac{3}{2}}+\frac{C_{1}\phi^{\frac{1}{2}}G^{\frac{1}{2}}}{R}\left(G-\lambda a \omega^{p}\right).
\end{equation}
Substituting the above four inequalities \eqref{y}, \eqref{yy}, \eqref{yyy} and \eqref{22} into \eqref{0}, we get
\begin{align*}
AG \geq\, & \frac{2}{n}\phi G^{2} -\bigg(2K+\frac{12\left\vert b\right\vert}{n}\bigg)\phi G-\frac{R\left\vert \nabla\phi \right\vert^{2}}{C_{1}\phi^{\frac{1}{2}}} G^{\frac{3}{2}}-\frac{C_{1}\phi^{\frac{1}{2}}G^{\frac{3}{2}}}{R}\\
&+ \bigg[\bigg(-\frac{4(\lambda-1)}{n}+(\lambda-2)p\omega^{-1}+\lambda p(p-1)\omega^{-2}\bigg)\phi G +\frac{\lambda C_{1}}{R}\phi^{\frac{1}{2}}G^{\frac{1}{2}}\\
& +\bigg(2\lambda K+\frac{4(\lambda-1)}{n}(2\left\vert b \right\vert-b)+(2(2-\lambda)\left\vert b\right\vert -\lambda b)p\omega^{-1}-2\lambda \left\vert b \right\vert p(p-1)\omega^{-2}\bigg)\phi\\
&+\bigg(\frac{2}{n}(\lambda-1)^{2}-\lambda(\lambda-1)p\omega^{-1}-\lambda^{2}p(p-1)\omega^{-2}\bigg)a\omega^{p}\phi\bigg]a\omega^{p}\\
\geq\,&\frac{2}{n}\phi G^{2} -\bigg(2K+\frac{12\left\vert b\right\vert}{n}\bigg)\phi G-\frac{R\left\vert \nabla\phi \right\vert^{2}}{C_{1}\phi^{\frac{1}{2}}} G^{\frac{3}{2}}-\frac{C_{1}\phi^{\frac{1}{2}}G^{\frac{3}{2}}}{R}+ \bigg[ -\frac{2(\lambda-1)\phi G}{n} \\
& +\frac{\lambda C_{1}}{R}\phi^{\frac{1}{2}}G^{\frac{1}{2}}+\bigg(2\lambda K+\frac{12(\lambda-1)\left\vert b \right\vert}{n}+(4-\lambda)p\left\vert b\right\vert \left\vert J\right\vert\bigg)\phi\bigg]a\omega^{p}.
\end{align*}

Now, if $$-\frac{2(\lambda-1)\phi G}{n} +\frac{\lambda C_{1}}{R}\phi^{\frac{1}{2}}G^{\frac{1}{2}}+\bigg(2\lambda K+\frac{12(\lambda-1)\left\vert b \right\vert}{n}+(4-\lambda)p\left\vert b\right\vert \left\vert J\right\vert\bigg)\phi \leq0,$$
then, it follows
\begin{equation}
\begin{split}
AG & \geq \frac{2}{n}\phi G^{2} -\bigg(2K+\frac{12\left\vert b\right\vert}{n}\bigg)\phi G-\frac{R\left\vert \nabla\phi \right\vert^{2}}{C_{1}\phi^{\frac{1}{2}}} G^{\frac{3}{2}}-\frac{C_{1}\phi^{\frac{1}{2}}G^{\frac{3}{2}}}{R}.
\end{split}
\end{equation}
This implies
\[
A \geq \frac{2}{n}\phi G -\bigg(2K+\frac{12\left\vert b\right\vert}{n}\bigg)\phi -\frac{2C_{1}}{R}\phi^{\frac{1}{2}}G^{\frac{1}{2}}.
\]
In view of the Young's inequality we can see easily from the above that
\begin{equation}\label{15}
\phi G \leq n\left(2K+\frac{12\left\vert b\right\vert}{n}+ A +\frac{nC_{1}^{2}}{R^{2}}\right).
\end{equation}

If $$-\frac{2(\lambda-1)\phi G}{n} +\frac{\lambda C_{1}}{R}\phi^{\frac{1}{2}}G^{\frac{1}{2}}+\bigg(2\lambda K+\frac{12(\lambda-1)\left\vert b \right\vert}{n}+(4-\lambda)p\left\vert b\right\vert \left\vert J\right\vert\bigg)\phi >0,$$
Noting that
\[
\left\vert J\right\vert \leq \frac{2(\lambda-1)}{(2-\lambda)np},
\]
Thus, we have
\begin{equation}\label{16}
\phi G \leq 2n\left(\frac{n\lambda^{2}C_{1}^{2}}{8(\lambda-1)^{2}R^{2}}+\frac{\lambda}{\lambda-1}K+\frac{(16-7\lambda)\left\vert b\right\vert}{(2-\lambda)n}\right).
\end{equation}
Combining \eqref{15} and \eqref{16}, we conclude that
\begin{equation}\label{31}
\sup_{B_R}\left(\frac{\left\vert \nabla u \right\vert^{2}}{u^{2}}+\lambda a (\log u)^{p}+2\left\vert b\right\vert \right) \leq n\left(\frac{2\lambda K}{\lambda-1} + A +\frac{n\lambda^{2}C_{1}^{2}}{4(\lambda-1)^{2}R^{2}}+\frac{2(16-7\lambda)}{(2-\lambda)n}\left\vert b\right\vert\right).
\end{equation}

\medskip

$(4).$ Assume $\omega^{p} < 0$ and $\frac{1}{J} < \omega < 0$. Then
\begin{align*}
 &-\frac{4(\lambda-1)a}{n}\omega^{p}+(\lambda-2)ap\omega^{p-1}+\left(\lambda a p(p-1)+\frac{2(\lambda-1)a}{n\left\vert J\right\vert^{2}}\right)\omega^{p-2}\\
\geq \,\, & (\lambda-2)ap\left\vert \omega \right\vert^{p-1}-\lambda a p(p-1)\left\vert \omega \right\vert^{p-2}-\frac{2(\lambda-1)a\left\vert \omega\right\vert^{p-2}}{n\left\vert J\right\vert^{2}}\\
\geq \,\, & \frac{(\lambda-2)ap}{\left\vert J \right\vert^{p-1}}-\frac{\lambda ap(p-1)}{\left\vert J \right\vert^{p-2}}-\frac{2(\lambda-1)a}{n\left\vert J\right\vert^{p}}.
\end{align*}
And
\begin{align*}
&\frac{2(\lambda-1)^{2}a}{n}\omega^{p+2}-\lambda(\lambda-1)ap\omega^{p+1}-\lambda^{2}ap(p-1)\omega^{p}+\bigg(2\lambda K+\frac{4(\lambda-1)}{n}(2\left\vert b\right\vert-b)\bigg)\omega^{2} \\
& +(2(2-\lambda)\left\vert b\right\vert-\lambda b)p\omega-2\lambda \left\vert b\right\vert p(p-1)\\
\leq\,\, &\lambda^{2}ap(p-1)\left\vert \omega\right\vert^{p}+\bigg(2\lambda K+\frac{12(\lambda-1)\left\vert b\right\vert}{n}\bigg)\left\vert \omega\right\vert^{2} +(4-\lambda)p\left\vert b\right\vert \left\vert \omega\right\vert\\
\leq\,\, &\frac{\lambda^{2}ap(p-1)}{\left\vert J \right\vert^{p}}+\frac{2\lambda nK+12(\lambda-1)\left\vert b\right\vert}{n\left\vert J\right\vert^{2}} +\frac{(4-\lambda)p\left\vert b\right\vert}{ \left\vert J\right\vert}.
\end{align*}
Combining \eqref{0}, \eqref{22} and the above two inequalities, we obtain
\begin{align*}
AG
\geq\, & \frac{2}{n}\phi G^{2} -\bigg(2K+\frac{12\left\vert b\right\vert}{n}\bigg)\phi G-\frac{R\left\vert \nabla\phi \right\vert^{2}}{C_{1}\phi^{\frac{1}{2}}} G^{\frac{3}{2}}-\frac{C_{1}\phi^{\frac{1}{2}}G^{\frac{3}{2}}}{R}\\
& +\bigg( -\frac{4(\lambda-1)a}{n}\omega^{p}+(\lambda-2)ap\omega^{p-1}+\lambda a p(p-1)\omega^{p-2}+\frac{2(\lambda-1)a\omega^{p-2}}{n\left\vert J\right\vert^{2}}\bigg)\phi G\\
& + \bigg( -\frac{2(\lambda-1)}{n\left\vert J\right\vert^{2}}\phi G +\frac{\lambda C_{1}}{R}\omega^{2}\phi^{\frac{1}{2}}G^{\frac{1}{2}}+\bigg(\frac{2(\lambda-1)^{2}a}{n}\omega^{p+2}-\lambda(\lambda-1)ap\omega^{p+1}\\
& -\lambda^{2}ap(p-1)\omega^{p}+\bigg(2\lambda K+\frac{4(\lambda-1)}{n}(2\left\vert b\right\vert-b)\bigg)\omega^{2}
+(2(2-\lambda)\left\vert b\right\vert-\lambda b)p\omega\\
& -2\lambda \left\vert b\right\vert p(p-1)\bigg)\phi\bigg)a\omega^{p-2}\\
\geq\, & \frac{2}{n}\phi G^{2} -\bigg(2K+\frac{12\left\vert b\right\vert}{n}\bigg)\phi G-\frac{2C_{1}\phi^{\frac{1}{2}}G^{\frac{3}{2}}}{R}-\bigg( \frac{(2-\lambda)ap}{\left\vert J \right\vert^{p-1}}+\frac{\lambda ap(p-1)}{\left\vert J \right\vert^{p-2}}\\
& +\frac{2(\lambda-1)a}{n\left\vert J\right\vert^{p}}\bigg)\phi G+ \bigg( -\frac{2(\lambda-1)}{n\left\vert J\right\vert^{2}}\phi G +\frac{\lambda C_{1}}{R\left\vert J \right\vert^{2}}\phi^{\frac{1}{2}}G^{\frac{1}{2}}+\bigg(\frac{\lambda^{2}ap(p-1)}{\left\vert J \right\vert^{p}}\\
& +\frac{2\lambda nK+12(\lambda-1)\left\vert b\right\vert}{n\left\vert J\right\vert^{2}} +\frac{(4-\lambda)p\left\vert b\right\vert}{ \left\vert J\right\vert}\bigg)\phi\bigg)a\omega^{p-2}.
\end{align*}

If
\begin{align*}
-\frac{2(\lambda-1)}{n\left\vert J\right\vert^{2}}\phi G +\frac{\lambda C_{1}}{R\left\vert J \right\vert^{2}}\phi^{\frac{1}{2}}G^{\frac{1}{2}}+\bigg(\frac{\lambda^{2}ap(p-1)}{\left\vert J \right\vert^{p}} +\frac{2\lambda K}{\left\vert J\right\vert^{2}} +\frac{12(\lambda-1)\left\vert b\right\vert}{n\left\vert J\right\vert^{2}}+\frac{(4-\lambda)p\left\vert b\right\vert}{ \left\vert J\right\vert}\bigg)\phi \leq 0,
\end{align*}
then
\begin{equation}\label{w}
\begin{split}
AG  \geq & \frac{2}{n}\phi G^{2} -\bigg(2K+\frac{12\left\vert b\right\vert}{n}\bigg)\phi G-\frac{2C_{1}\phi^{\frac{1}{2}}G^{\frac{3}{2}}}{R}-\bigg( \frac{(2-\lambda)ap}{\left\vert J \right\vert^{p-1}}+\frac{\lambda ap(p-1)}{\left\vert J \right\vert^{p-2}}\\
& +\frac{2(\lambda-1)a}{n\left\vert J\right\vert^{p}}\bigg)\phi G.
\end{split}
\end{equation}
Noting that
\[
\left\vert J\right\vert \leq \frac{2(\lambda-1)}{(2-\lambda)np} \quad\quad \mbox{and} \quad\quad \left\vert J\right\vert^{2} \leq \frac{2(\lambda-1)}{\lambda np(p-1)}.
\]
Thus, we can see easily from \eqref{w} and the Young's inequality that
\begin{equation}\label{17}
\phi G \leq n\left(2K + A +\frac{nC_{1}^{2}}{R^{2}}+\frac{12\left\vert b\right\vert}{n}+\frac{6(\lambda-1)a}{n\left\vert J\right\vert^{p}}\right).
\end{equation}

If
\begin{align*}
 -\frac{2(\lambda-1)}{n\left\vert J\right\vert^{2}}\phi G +\frac{\lambda C_{1}}{R\left\vert J \right\vert^{2}}\phi^{\frac{1}{2}}G^{\frac{1}{2}}+\bigg(\frac{\lambda^{2}ap(p-1)}{\left\vert J \right\vert^{p}} +\frac{2\lambda K}{\left\vert J\right\vert^{2}} +\frac{12(\lambda-1)\left\vert b\right\vert}{n\left\vert J\right\vert^{2}} +\frac{(4-\lambda)p\left\vert b\right\vert}{ \left\vert J\right\vert}\bigg)\phi > 0.
\end{align*}
Then, it is easy to see that
\begin{equation}\label{18}
\phi G \leq n\left( \frac{2\lambda}{\lambda-1}K+\frac{n\lambda^{2}C_{1}^{2}}{4(\lambda-1)^{2}R^{2}}+\frac{2\lambda a}{n\left\vert J \right\vert^{p}}+\frac{2(16-7\lambda)\left\vert b\right\vert}{(\lambda-1)n}\right).
\end{equation}
Hence, by combining \eqref{17} and \eqref{18}, we conclude
\begin{align}\label{19}
\sup_{B_R}\bigg(\frac{\left\vert \nabla u \right\vert^{2}}{u^{2}}+\lambda a (\log u)^{p}+2\left\vert b\right\vert \bigg) \leq \,\, & n\bigg( \frac{2\lambda}{\lambda-1}K+A+\frac{n\lambda^{2}C_{1}^{2}}{4(\lambda-1)^{2}R^{2}}+\frac{\max\{6(\lambda-1),2\lambda\}a}{n\left\vert J \right\vert^{p}}\nonumber\\
&\quad +\max\bigg\{6,\frac{16-7\lambda}{\lambda-1}\bigg\}\frac{2\left\vert b\right\vert}{n}\bigg).
\end{align}

Now, by combining \eqref{20}, \eqref{58}, \eqref{31} and \eqref{19}, we complete the proof of Theorem \ref{main} for the case $a>0$.
\medskip

{\bf Case $2$}: $a < 0$. For the present situation, we need to consider the following four cases on $\omega$: $(1).\, \omega^{p} < 0 $ and $\omega \leq L$ ; $(2). \, \omega^{p} < 0 $ and $L < \omega < 0$; $(3). \, \omega^{p} \geq 0 $ and $\omega \in(-\infty, \frac{1}{J})\cup(V, +\infty)$ and $(4). \, \omega^{p} \geq 0 $ and $\frac{1}{J} \leq \omega \leq V$. We will discuss them one by one.

\medskip

$(1).$  If $\omega^{p} < 0 $ and $\omega \leq L$, we have the followings
\begin{equation}\label{52}
-\frac{4(\lambda-1)a}{n}\omega^{p}+(\lambda-2)ap\omega^{p-1}+\lambda ap(p-1)\omega^{p-2} \geq -\frac{4(\lambda-1)}{\lambda n}G,
\end{equation}

\begin{equation}\label{52'}
\frac{2(\lambda-1)^{2}}{n}\omega^{2} -\lambda(\lambda-1) p\omega-\lambda^{2} p(p-1) \geq 0,
\end{equation}
and
\begin{equation}\label{52"}
\begin{split}
\left[\left(2\lambda K+\frac{4(\lambda-1)}{n}(2\left\vert b \right\vert -b)\right)\omega^{2}+(2(2-\lambda)\left\vert b\right\vert-\lambda b)p\omega -2\lambda \left\vert b \right\vert p(p-1)\right]a\omega^{p-2} \geq 0.
\end{split}
\end{equation}
By substituting \eqref{52}, \eqref{52'} and \eqref{52"} into \eqref{0}, we derive
\begin{align*}
AG \geq\, & \frac{2}{n}\phi G^{2} -\bigg(\frac{4(\lambda-1)}{\lambda n}G+2K+\frac{12\left\vert b\right\vert}{n}\bigg)\phi G-2\left\vert \nabla\phi \right\vert G^{\frac{3}{2}}\\
& + \bigg(\frac{2(\lambda-1)^{2}}{n}\omega^{2} -\lambda(\lambda-1) p\omega-\lambda^{2} p(p-1)\bigg)a^{2}\omega^{2p-2}\phi\\
& +\left(\left(2\lambda K+\frac{4(\lambda-1)}{n}(2\left\vert b \right\vert -b)\right)\omega^{2}+(2(2-\lambda)\left\vert b\right\vert-\lambda b)p\omega -2\lambda \left\vert b \right\vert p(p-1)\right)a\omega^{p-2}\phi\\
\geq\,&\frac{2(2-\lambda)}{\lambda n}\phi G^{2} -\bigg(2K+\frac{12\left\vert b\right\vert}{n}\bigg)\phi G-2\left\vert \nabla\phi \right\vert G^{\frac{3}{2}}.
\end{align*}
Dividing both side of the last inequality by $G$, we obtain
\begin{align}\label{10}
A \geq \frac{2(2-\lambda)}{\lambda n}Q  -\left(2K+\frac{12\left\vert b \right\vert}{n}\right)-2\frac{C_{1}}{R}Q^{\frac{1}{2}}.
\end{align}
Then it follows easily from Young's inequality that
\[
Q \leq \frac{\lambda n}{2-\lambda}\left(2K+A+\frac{12\left\vert b \right\vert}{n}+\frac{\lambda nC_{1}^{2}}{(2-\lambda)R^{2}}\right).
\]
Hence,
\begin{equation}\label{30}
\sup_{\mathbb{B}_{R}}G \leq \sup_{\mathbb{B}_{2R}}(\phi G)\leq \frac{\lambda n}{2-\lambda}\left(2K+A+\frac{12\left\vert b \right\vert}{n}+\frac{\lambda nC_{1}^{2}}{(2-\lambda)R^{2}}\right).
\end{equation}

\medskip

$(2)$. If $\omega^{p} < 0$ and $L < \omega < 0$, then we have
\begin{equation}\label{28}
\begin{split}
-\frac{4(\lambda-1)a}{n} \omega^{p}+ (\lambda-2) ap\omega^{p-1}+\frac{\lambda a}{3} p(p-1)\omega^{p-2} \geq -\frac{4(\lambda-1)}{\lambda n}G,\\
\end{split}
\end{equation}

\begin{equation}\label{28'}
\begin{split}
\frac{2(\lambda-1)^{2}}{n}a\omega^{p+2}-\lambda(\lambda-1) ap\omega^{p+1}-\lambda^{2}ap(p-1)\omega^{p} \geq \lambda^{2}ap(p-1)\left\vert L \right\vert^{p},\\
\end{split}
\end{equation}
and
\begin{equation}\label{28"}
\begin{split}
&\left(2\lambda K+\frac{4(\lambda-1)}{n}\left(2\left\vert b\right\vert-b\right)\right)\omega^{2}+(2(2-\lambda)\left\vert b\right\vert-\lambda b)p\omega-2\lambda \left\vert b\right\vert p(p-1)\\
\geq &-(4-\lambda)p\left\vert b\right\vert \left\vert L\right\vert-2\lambda \left\vert b\right\vert p(p-1).
\end{split}
\end{equation}
Thus, by substituting \eqref{28}, \eqref{28'} and \eqref{28"} into \eqref{0}, we obtain
\begin{align*}
AG \geq\,& \frac{2}{n}\phi G^{2} -\bigg(2K+\frac{12\left\vert b\right\vert}{n}\bigg)\phi G-2\left\vert \nabla\phi \right\vert G^{\frac{3}{2}} + \bigg(-\frac{4(\lambda-1)a}{n} \omega^{p}+ (\lambda-2) ap\omega^{p-1}\\
& +\frac{\lambda a}{3} p(p-1)\omega^{p-2}\bigg)\phi G+\bigg[\frac{2\lambda}{3}p(p-1)\phi G+ \bigg(\frac{2(\lambda-1)^{2}}{n}a\omega^{p+2}-\lambda(\lambda-1) ap\omega^{p+1}\\
& -\lambda^{2}ap(p-1)\omega^{p} +\left(2\lambda K+\frac{4(\lambda-1)}{n}\left(2\left\vert b\right\vert-b\right)\right)\omega^{2}+(2(2-\lambda)\left\vert b\right\vert-\lambda b)p\omega\\
& -2\lambda \left\vert b\right\vert p(p-1)\bigg)\phi \bigg] a\omega^{p-2}\\
\geq\,& \frac{2(2-\lambda)}{\lambda n}\phi G^{2} -\bigg(2K+\frac{12\left\vert b\right\vert}{n}\bigg)\phi G-2\left\vert \nabla\phi \right\vert G^{\frac{3}{2}}+\bigg(\frac{2\lambda}{3}p(p-1)\phi G+ \lambda^{2}ap(p-1)\left\vert L \right\vert^{p}\\
& -(4-\lambda)p\left\vert b\right\vert \left\vert L\right\vert-2\lambda \left\vert b\right\vert p(p-1)\bigg)a\omega^{p-2}.
\end{align*}

If $$  \frac{2\lambda}{3}p(p-1)\phi G+ \lambda^{2}ap(p-1)\left\vert L \right\vert^{p} -(4-\lambda)p\left\vert b\right\vert \left\vert L\right\vert-2\lambda \left\vert b\right\vert p(p-1) \geq 0,$$
then,
\begin{equation}
\begin{split}
AG & \geq \frac{2(2-\lambda)}{\lambda n}\phi G^{2} -\bigg(2K+\frac{12\left\vert b\right\vert}{n}\bigg)\phi G-2\left\vert \nabla\phi \right\vert G^{\frac{3}{2}}\\
& \geq \frac{2(2-\lambda)}{\lambda n}\phi G^{2} -\bigg(2K+\frac{12\left\vert b\right\vert}{n}\bigg)\phi G-2\frac{C_{1}}{R}\phi^{\frac{1}{2}} G^{\frac{3}{2}}.
\end{split}
\end{equation}
Hence, it follows immediately from Young's inequality that
\begin{equation}\label{43}
\phi G \leq \frac{\lambda n}{2-\lambda}\left(2K+A+\frac{12\left\vert b\right\vert}{n}+\frac{\lambda nC_{1}^{2}}{(2-\lambda)R^{2}}\right).
\end{equation}

If $$\frac{2\lambda}{3}p(p-1)\phi G+ \lambda^{2}ap(p-1)\left\vert L \right\vert^{p} -(4-\lambda)p\left\vert b\right\vert \left\vert L\right\vert-2\lambda \left\vert b\right\vert p(p-1) <0,$$
then it is easy to see that
\begin{equation}\label{44}
\phi G \leq -\frac{3\lambda \left\vert L \right\vert^{p}}{2}a+\frac{3(4-\lambda)\left\vert b\right\vert \left\vert L\right\vert}{2\lambda(p-1)}+3\left\vert b\right\vert.
\end{equation}
Combining \eqref{43} and \eqref{44} we get
\begin{equation}\label{45}
\begin{split}
\sup_{B_R}\left(\frac{\left\vert \nabla u \right\vert^{2}}{u^{2}}+\lambda a (\log u)^{p} +2\left\vert b\right\vert \right)\leq \,\, & n\bigg(\frac{2\lambda}{2-\lambda}K + \frac{\lambda}{2-\lambda}A +\frac{12\lambda \left\vert b\right\vert}{(2-\lambda)n}+\frac{\lambda^{2} nC_{1}^{2}}{(2-\lambda)^{2}R^{2}}\\
& -\frac{3\lambda\left\vert L \right\vert^{p}}{2n}a+\frac{3(4-\lambda)\left\vert b\right\vert \left\vert L\right\vert}{2\lambda n(p-1)}\bigg).
\end{split}
\end{equation}

\medskip

$(3)$. If $ \omega^{p} \geq 0 $ and $\omega \in(-\infty, \frac{1}{J})\cup(V, +\infty)$, then
\begin{equation}\label{46}
\begin{split}
-\frac{4(\lambda-1)}{n}+(\lambda-2)p\omega^{-1}+\lambda p(p-1)\omega^{-2} \leq -\frac{2(\lambda-1)}{n},\\
\end{split}
\end{equation}
\begin{equation}\label{46'}
\begin{split}
\frac{2(\lambda-1)^{2}}{n}-\lambda(\lambda-1) p\omega^{-1}-\lambda^{2} p(p-1)\omega^{-2} \geq  0,
\end{split}
\end{equation}
and
\begin{equation}\label{46"}
\begin{split}
&2\lambda K+\frac{4(\lambda-1)}{n}\left(2\left\vert b\right\vert-b\right)+(2(2-\lambda)\left\vert b \right\vert-\lambda b)p\omega^{-1}-2\lambda\left\vert b\right\vert p(p-1)\omega^{-2} \\
\leq & 2\lambda K+\frac{12(\lambda-1)}{n}\left\vert b\right\vert+(4-\lambda)p\left\vert b\right\vert \left\vert J\right\vert.
\end{split}
\end{equation}
By substituting \eqref{22}, \eqref{46}, \eqref{46'} and \eqref{46"} into \eqref{0}, we derive
\begin{align*}
AG \geq\, &  \frac{2}{n}\phi G^{2} -\bigg(2K+\frac{12\left\vert b\right\vert}{n}\bigg)\phi G-\frac{R\left\vert \nabla\phi \right\vert^{2}}{C_{1}\phi^{\frac{1}{2}}} G^{\frac{3}{2}}-\frac{C_{1}\phi^{\frac{1}{2}}G^{\frac{3}{2}}}{R}\\
&+ \bigg[\bigg( -\frac{4(\lambda-1)}{n}+(\lambda-2)p\omega^{-1}+\lambda p(p-1)\omega^{-2}\bigg)\phi G+\frac{\lambda C_{1}}{R}\phi^{\frac{1}{2}}G^{\frac{1}{2}}\\
& +\bigg(\frac{2(\lambda-1)^{2}}{n}-\lambda(\lambda-1) p\omega^{-1}-\lambda^{2} p(p-1)\omega^{-2}\bigg)a\omega^{p}\phi+\bigg(2\lambda K\\
& +\frac{4(\lambda-1)}{n}\left(2\left\vert b\right\vert-b\right) +(2(2-\lambda)\left\vert b \right\vert-\lambda b)p\omega^{-1}-2\lambda\left\vert b\right\vert p(p-1)\omega^{-2}\bigg)\phi\bigg]a\omega^{p}\\
\geq\, & \frac{2}{n}\phi G^{2} -\bigg(2K+\frac{12\left\vert b\right\vert}{n}\bigg)\phi G-\frac{R\left\vert \nabla\phi \right\vert^{2}}{C_{1}\phi^{\frac{1}{2}}} G^{\frac{3}{2}}-\frac{C_{1}\phi^{\frac{1}{2}}G^{\frac{3}{2}}}{R}+\bigg(-\frac{2(\lambda-1)}{n}\phi G\\
& +\frac{\lambda C_{1}}{R}\phi^{\frac{1}{2}}G^{\frac{1}{2}}+2\lambda K+\frac{12(\lambda-1)}{n}\left\vert b\right\vert+(4-\lambda)p\left\vert b\right\vert \left\vert J\right\vert\bigg)a\omega^{p}\\
\geq\, & \frac{2}{n}\phi G^{2} -\bigg(2K+\frac{12\left\vert b\right\vert}{n}\bigg)\phi G-\frac{2C_{1}}{R}\phi^{\frac{1}{2}}G^{\frac{3}{2}}+\bigg(-\frac{2(\lambda-1)}{n}\phi G+\frac{\lambda C_{1}}{R}\phi^{\frac{1}{2}}G^{\frac{1}{2}}\\
& +2\lambda K+\frac{12(\lambda-1)}{n}\left\vert b\right\vert+(4-\lambda)p\left\vert b\right\vert \left\vert J\right\vert\bigg)a\omega^{p}.
\end{align*}

If $$  -\frac{2(\lambda-1)}{n}\phi G+\frac{\lambda C_{1}}{R}\phi^{\frac{1}{2}}G^{\frac{1}{2}} +2\lambda K+\frac{12(\lambda-1)}{n}\left\vert b\right\vert+(4-\lambda)p\left\vert b\right\vert \left\vert J\right\vert \leq 0,$$
then
\begin{equation}
\begin{split}
AG & \geq \frac{2}{n}\phi G^{2} -\bigg(2K+\frac{12\left\vert b\right\vert}{n}\bigg)\phi G-\frac{2C_{1}}{R}\phi^{\frac{1}{2}}G^{\frac{3}{2}}.
\end{split}
\end{equation}
Immediately, it follows from Young's inequality that
\begin{equation}\label{63}
\phi G \leq n\left(2K+A+\frac{12\left\vert b\right\vert}{n}+\frac{nC_{1}^{2}}{R^{2}}\right).
\end{equation}

If $$-\frac{2(\lambda-1)}{n}\phi G+\frac{\lambda C_{1}}{R}\phi^{\frac{1}{2}}G^{\frac{1}{2}} +2\lambda K+\frac{12(\lambda-1)}{n}\left\vert b\right\vert+(4-\lambda)p\left\vert b\right\vert \left\vert J\right\vert >0,$$
then it is easy to see that
\begin{equation}\label{64}
\phi G \leq n\left(\frac{2\lambda}{\lambda-1}K+\frac{n\lambda^{2}C_{1}^{2}}{4(\lambda-1)^{2}R^{2}}+\frac{12}{n}\left\vert b\right\vert+\frac{4-\lambda}{\lambda-1}\left\vert b\right\vert p\left\vert J\right\vert\right).
\end{equation}
Thus, in this case, by combining \eqref{63} and \eqref{64}, we get
\begin{equation}\label{57}
\sup_{B_R}\left(\frac{\left\vert \nabla u \right\vert^{2}}{u^{2}}+\lambda a (\log u)^{p} +2\left\vert b\right\vert \right)\leq n\left(\frac{2\lambda}{\lambda-1}K+A+\frac{n\lambda^{2}C_{1}^{2}}{4(\lambda-1)^{2}R^{2}}+\frac{2(16-7\lambda)}{(2-\lambda)n}\left\vert b\right\vert\right).
\end{equation}
\medskip

$(4)$. If $ \omega^{p} \geq 0 $ and $\frac{1}{J} \leq \omega \leq V$, we obtain
\begin{equation}\label{56}
\begin{split}
&-\frac{4(\lambda-1)a}{n}\omega^{p}+(\lambda-2)ap\omega^{p-1}+\lambda a p(p-1)\omega^{p-2} +\frac{2(\lambda-1)aV^{2}}{n}\omega^{p-2} \\
\geq \,\, & \frac{2(\lambda-1)aV^{p}}{n }+(2-\lambda)apV^{p-1}+\lambda a p(p-1)V^{p-2},
\end{split}
\end{equation}
\begin{equation}\label{56'}
\begin{split}
&\frac{2(\lambda-1)^{2}a}{n}\omega^{p+2}-\lambda(\lambda-1)ap\omega^{p+1}-\lambda^{2}ap(p-1)\omega^{p}\\
 \leq \,\, & -\lambda(\lambda-1)apV^{p+1}-\lambda^{2}p(p-1)aV^{p},
\end{split}
\end{equation}
and
\begin{equation}\label{56"}
\begin{split}
&\bigg(2\lambda K+\frac{4(\lambda-1)}{n}\left(2\left\vert b\right\vert-b\right)\bigg)\omega^{2}+(2(2-\lambda)\left\vert b \right\vert-\lambda b)p\omega-2\lambda\left\vert b\right\vert p(p-1)\\
 \leq \,\, & 2\lambda KV^{2}+\frac{12(\lambda-1)\left\vert b\right\vert V^{2}}{n}+(4-\lambda)\left\vert b\right\vert pV.
\end{split}
\end{equation}
By substituting \eqref{22}, \eqref{56}, \eqref{56'} and \eqref{56"} into \eqref{0}, we derive
\begin{align*}
AG \geq \,\, &  \frac{2}{n}\phi G^{2} -\bigg(2K+\frac{12\left\vert b\right\vert}{n}\bigg)\phi G-\frac{R\left\vert \nabla\phi \right\vert^{2}}{C_{1}\phi^{\frac{1}{2}}} G^{\frac{3}{2}}-\frac{C_{1}\phi^{\frac{1}{2}}G^{\frac{3}{2}}}{R}\\
& +\bigg( -\frac{4(\lambda-1)a}{n}\omega^{p}+(\lambda-2)ap\omega^{p-1}+\lambda a p(p-1)\omega^{p-2} +\frac{2(\lambda-1)aV^{2}}{n}\omega^{p-2}\bigg)\phi G\\
& + \bigg[-\frac{2(\lambda-1)V^{2}}{n}\phi G+\frac{\lambda C_{1}}{R}\omega^{2}\phi^{\frac{1}{2}}G^{\frac{1}{2}}+\bigg(\frac{2(\lambda-1)^{2}a}{n}\omega^{p+2}-\lambda(\lambda-1)ap\omega^{p+1}\\
& -\lambda^{2}ap(p-1)\omega^{p}\bigg)\phi +\bigg(\bigg(2\lambda K+\frac{4(\lambda-1)}{n}\left(2\left\vert b\right\vert-b\right)\bigg)\omega^{2}+(2(2-\lambda)\left\vert b \right\vert-\lambda b)p\omega\\
& -2\lambda\left\vert b\right\vert p(p-1)\bigg)\phi\bigg]a\omega^{p-2}\\
\geq \,\, & \frac{2}{n}\phi G^{2} -\bigg(2K+\frac{12\left\vert b\right\vert}{n}\bigg)\phi G-2\frac{C_{1}\phi^{\frac{1}{2}}G^{\frac{3}{2}}}{R} +\bigg( \frac{2(\lambda-1)aV^{p}}{n }+(2-\lambda)apV^{p-1}\\
& +\lambda a p(p-1)V^{p-2}\bigg)\phi G+\bigg(-\frac{2(\lambda-1)V^{2}}{n}\phi G+\frac{\lambda C_{1}V^{2}}{R}\phi^{\frac{1}{2}}G^{\frac{1}{2}} -\lambda(\lambda-1)apV^{p+1}\\
& -\lambda^{2}p(p-1)aV^{p}+2\lambda KV^{2}+\frac{12(\lambda-1)\left\vert b\right\vert V^{2}}{n}+(4-\lambda)\left\vert b\right\vert pV\bigg)a\omega^{p-2}.
\end{align*}
\medskip

We need to discuss the following two cases. If
\begin{align*}
&-\frac{2(\lambda-1)V^{2}}{n}\phi G+\frac{\lambda C_{1}V^{2}}{R}\phi^{\frac{1}{2}}G^{\frac{1}{2}} -\lambda(\lambda-1)apV^{p+1} -\lambda^{2}p(p-1)aV^{p}\\&+2\lambda KV^{2}+\frac{12(\lambda-1)\left\vert b\right\vert V^{2}}{n}+(4-\lambda)\left\vert b\right\vert pV \leq 0,
\end{align*}
then
\begin{equation}
\begin{split}
AG  \geq \,\, &\frac{2}{n}\phi G^{2} -\bigg(2K+\frac{12\left\vert b\right\vert}{n}\bigg)\phi G-2\frac{C_{1}\phi^{\frac{1}{2}}G^{\frac{3}{2}}}{R}+\bigg( \frac{2(\lambda-1)aV^{p}}{n }+(2-\lambda)apV^{p-1}\\
&+\lambda a p(p-1)V^{p-2}\bigg)\phi G.
\end{split}
\end{equation}
Noting that,
\begin{align*}
\frac{1}{V} & =\frac{4}{n\left(p+\sqrt{p^{2}+\frac{8}{n}p(p-1)}\right)} \frac{\lambda-1}{\lambda} \leq \frac{2(\lambda-1)}{\lambda np},
\end{align*}
and
\begin{align*}
\frac{1}{V} & \leq \frac{2(\lambda-1)}{\sqrt{2np(p-1)}\lambda}.
\end{align*}
We can deduce easily from the above and the Young's inequality that
\begin{equation}\label{53}
\phi G \leq n\left(2K+A+\frac{12\left\vert b\right\vert}{n}+\frac{nC_{1}^{2}}{R^{2}}-\frac{(2\lambda^{2}-2)aV^{p}}{\lambda n}\right).
\end{equation}
\medskip

Now, if
\begin{align*}
&-\frac{2(\lambda-1)V^{2}}{n}\phi G+\frac{\lambda C_{1}V^{2}}{R}\phi^{\frac{1}{2}}G^{\frac{1}{2}} -\lambda(\lambda-1)apV^{p+1} -\lambda^{2}p(p-1)aV^{p}\\&+2\lambda KV^{2}+\frac{12(\lambda-1)\left\vert b\right\vert V^{2}}{n}+(4-\lambda)\left\vert b\right\vert pV > 0.
\end{align*}
Then, it is easy to see that
\begin{equation}\label{54}
\phi G \leq n\left(\frac{2\lambda}{\lambda-1}K+\frac{\lambda^{2}nC_{1}^{2}}{4(\lambda-1)^{2}R^{2}}+\frac{2(5\lambda+4)\left\vert b\right\vert}{\lambda n}-\frac{4(\lambda-1)aV^{p}}{n}\right).
\end{equation}
Thus, by combining \eqref{53} and \eqref{54}, we arrive at
\begin{equation}\label{55}
\begin{split}
\sup_{B_R}\left(\frac{\left\vert \nabla u \right\vert^{2}}{u^{2}}+\lambda a (\log u)^{p} +2\left\vert b\right\vert \right)\leq \,\, & n\bigg(\frac{2\lambda}{\lambda-1}K+A+\frac{n\lambda^{2}C_{1}^{2}}{4(\lambda-1)^{2}R^{2}}+\frac{2(5\lambda+4)\left\vert b\right\vert}{\lambda n}\\
& -\frac{4(\lambda-1)aV^{p}}{n}\bigg).
\end{split}
\end{equation}
Hence, by summarizing the previous \eqref{30}, \eqref{45}, \eqref{57} and \eqref{55}, we complete the proof of Theorem \ref{main}.
\end{proof}

\begin{proof}[\bf Proof of Theorem \ref{any}]
For the case $a>0$, by taking $\lambda=\frac{3}{2}$ in Theorem \ref{main}, we obtain
\begin{align*}
&\frac{\left\vert \nabla u \right\vert^{2}}{u^{2}} +\frac{3a}{2}(\log u)^{p}\\
\leq & 3n \bigg( 2K+A+\frac{6nC_{1}^{2}}{R^{2}}+\frac{(2n(p-1))^{p-1}}{2}a+\frac{12\left\vert b\right\vert}{n}+\frac{a}{n\left\vert J \right\vert^{p}}+\frac{aV^{p+1}+V\left\vert b\right\vert}{n(p-1)}\bigg).
\end{align*}
Here, we have used the fact $\left\vert L\right\vert \leq V$ for $\lambda =\frac{3}{2}$, which can be checked by a direct calculation. For the case $\lambda\geq 2$, multiplying the above inequality by $\frac{2\lambda}{3}$ leads to
\begin{equation*}
\begin{aligned}
&\frac{2\lambda}{3}\frac{\left\vert \nabla u \right\vert^{2}}{u^{2}} +\lambda a(\log u)^{p}\\
\leq\, &2\lambda n \bigg( 2K+A+\frac{6nC_{1}^{2}}{R^{2}}+\frac{(2n(p-1))^{p-1}}{2}a+\frac{12\left\vert b\right\vert}{n}+\frac{a}{n\left\vert J \right\vert^{p}}+\frac{aV^{p+1}+V\left\vert b\right\vert}{n(p-1)}\bigg).
\end{aligned}
\end{equation*}
Since the fact $\lambda\geq 2$ implies
$$\frac{\left\vert \nabla u \right\vert^{2}}{u^{2}} +\lambda a(\log u)^{p}\leq \frac{2\lambda}{3}\frac{\left\vert \nabla u \right\vert^{2}}{u^{2}} +\lambda a(logu)^{p},$$
it follows that
\[
\frac{\left\vert \nabla u \right\vert^{2}}{u^{2}} +\lambda a(\log u)^{p}\leq 2\lambda n \bigg( 2K+A+\frac{6nC_{1}^{2}}{R^{2}}+\frac{(2n(p-1))^{p-1}}{2}a+\frac{12\left\vert b\right\vert}{n}+\frac{a}{n\left\vert J \right\vert^{p}}+\frac{V(aV^{p}+\left\vert b\right\vert)}{n(p-1)}\bigg).
\]

For the case $a<0$, we can use the same method to get the desired result. Thus, we finish the proof.
\end{proof}

\begin{proof}[\bf Proof of Corollary \ref{har}]
In the case $a>0$ and $p=\frac{2k}{2k_{2}+1} \geq 2$ , let $\lambda=\frac{3}{2}$, then it is easy to see that
$$\frac{3a}{2} \big(\log u\big)^{p}\ge 0.$$
As $\lambda =\frac{3}{2}$, we can check easily $\left\vert L\right\vert \leq V$. Theorem \ref{main} tells us that there holds true
\begin{align*}
\frac{\left\vert \nabla u \right\vert^{2}}{u^{2}} \leq &3n \bigg( 2K+A+\frac{6nC_{1}^{2}}{R^{2}}+\frac{(2n(p-1))^{p-1}}{2}a+\frac{12\left\vert b\right\vert}{n}+\frac{a}{n\left\vert J \right\vert^{p}}+\frac{V^{p+1}a}{n(p-1)}+\frac{V\left\vert b\right\vert}{n(p-1)} \bigg).
\end{align*}
Choose $x,y \in \mathbb{B}_{R/2}(O)$ such that
$$u(x)=\sup_{\mathbb{B}_{R/2}(O)}u(x)\ \ \text{and}\ \ u(y)=\inf_{\mathbb{B}_{R/2}(O)}u(x).$$
Let $\gamma(t), t\in [0,l]$ be a shortest curve with arc length in $(M,g)$ connecting $y$ and $x$ with $\gamma(0)=x,\ \gamma(l)=y$. By the triangle inequality, we can see easily that $\gamma\in \mathbb{B}_{R}(O)$ and $l\leq R$. Then

\begin{align*}
&\log u(x) -\log u(y) \leq \int_{\gamma} \ \frac{\left\vert \nabla u \right\vert}{u} \,\\
\leq \,& \int_{\gamma} \ \bigg[ 3n \bigg( 2K+A+\frac{6nC_{1}^{2}}{R^{2}}+\frac{(2n(p-1))^{p-1}}{2}a+\frac{12\left\vert b\right\vert}{n}+\frac{a}{n\left\vert J \right\vert^{p}}+\frac{V^{p+1}a}{n(p-1)}+\frac{V\left\vert b\right\vert}{n(p-1)} \bigg)\bigg]^{\frac{1}{2}} \,\\
\leq \, & \bigg[ 3n \bigg( 2K+A+\frac{6nC_{1}^{2}}{R^{2}}+\frac{(2n(p-1))^{p-1}}{2}a+\frac{12\left\vert b\right\vert}{n}+\frac{a}{n\left\vert J \right\vert^{p}}+\frac{V^{p+1}a}{n(p-1)}+\frac{V\left\vert b\right\vert}{n(p-1)} \bigg)\bigg]^{\frac{1}{2}} \cdot R.
\end{align*}
Hence
\[
\sup_{\mathbb{B}_{R/2}(O)}u \leq e^{R\sqrt{3n \left( 2K+A+\frac{6nC_{1}^{2}}{R^{2}}+\frac{(2n(p-1))^{p-1}}{2}a+\frac{12\left\vert b\right\vert}{n}+\frac{a}{n\left\vert J \right\vert^{p}}+\frac{V^{p+1}a}{n(p-1)}+\frac{V\left\vert b\right\vert}{n(p-1)} \right)}}\inf_{\mathbb{B}_{R/2}(O)}u.
\]
We finish the proof.
\end{proof}
\begin{proof}[\bf Proof of Corollary \ref{mor}:] By letting $\lambda=\frac{3}{2}$ in Theorem \ref{main}, we can see easily that
\begin{align*}
\big(\log u\big)^{p}\leq &\frac{2n}{a} \left( 2K+A+\frac{6nC_{1}^{2}}{R^{2}}+\frac{12\left\vert b\right\vert}{n}+\frac{V\left\vert b\right\vert}{n(p-1)} \right) + \left(n^p(2(p-1))^{p-1}+\frac{2}{\left\vert J \right\vert^{p}}+\frac{2V^{p+1}}{(p-1)}\right).
\end{align*}
Then, by letting $R\rightarrow+\infty$ in the above inequality we obtain
\begin{align*}
\big(\log u\big)^{p}\leq &\frac{2n}{a} \left( 2K +\frac{12\left\vert b\right\vert}{n}+\frac{V\left\vert b\right\vert}{n(p-1)} \right) + \left(n^p(2(p-1))^{p-1}+\frac{2}{\left\vert J \right\vert^{p}}+\frac{2V^{p+1}}{(p-1)}\right).
\end{align*}
Other required inequalities on a priori estimates on the bound of $u$ follows immediately.
\end{proof}

\medskip

\noindent {\it\textbf{Acknowledgements}}: The authors are supported partially by NSFC grant (No.11731001). The author Y. Wang is supported partially by NSFC grant (No.11971400) and Guangdong Basic and Applied Basic Research Foundation Grant (No. 2020A1515011019).

\bibliographystyle{amsalpha}

\end{document}